\newcommand{\CC}{{\mathbb{C}}}
\newcommand{\FF}{{\mathbb{F}}}
\newcommand{\RR}{{\mathbb{R}}}
\newcommand{\ZZ}{{\mathbb{Z}}}
\newcommand{\cA} {\mathcal A}
\newcommand{\cB} {\mathcal B}
\newcommand{\cC} {\mathcal C}
\newcommand{\cE} {\mathcal E}
\newcommand{\cF} {\mathcal F}
\newcommand{\cL} {\mathcal L}
\newcommand{\cO} {\mathcal O}
\renewcommand{\S} {\mathcal S}
\newcommand{\cS} {\mathcal S}
\newcommand{\Fix}{{\operatorname{Fix}}}
\newcommand{\Hom}{{\operatorname{Hom}}}
\newcommand{\Ext}{{\operatorname{Ext}}}
\newcommand{\Id}{{\operatorname{Id}}}
\newcommand{\Ch}{{\operatorname{Ch}}}
\newcommand{\C}{{\operatorname{C}}}
\newcommand{\Inn}{{\operatorname{Inn}}}
\newcommand{\ob}{\operatorname{ob}}
\newcommand{\Rep}{{\operatorname{Rep}}}
\newcommand{\colim@}[2]{
  \vtop{\m@th\ialign{##\cr
    \hfil$#1\operator@font colim$\hfil\cr
    \noalign{\nointerlineskip\kern1.5\ex@}#2\cr
    \noalign{\nointerlineskip\kern-\ex@}\cr}}
}
\newcommand{\colim}{
  \mathop{\mathpalette\colim@{\rightarrowfill@\textstyle}}\nmlimits@
}
\newcommand{\reg}{{\operatorname{reg}}}
\newcommand{\Aut}{{\operatorname{Aut}}}
\newcommand{\im}{{\operatorname{im}}}
\newcommand{\res}{{\operatorname{res}}}
\newcommand{\Spec}{{\operatorname{Spec}}}
\newcommand{\GL}{\operatorname{GL}}
\newcommand{\Mor}{{\operatorname{Mor}}}
\newtheorem{thm}{Theorem}[section]
\newtheorem{lem}[thm]{Lemma}
\newtheorem{cor}[thm]{Corollary}
\newtheorem{prop}[thm]{Proposition}
\newtheorem{conj}[thm]{Conjecture}
\theoremstyle{definition}
\newtheorem{defn}[thm]{Definition}
\numberwithin{equation}{section}
\begin{document}

\title{Spectra of subrings of cohomology generated by characteristic classes for fusion systems}

\author{Ian J. Leary}
\address{CGTA, School of Mathematical Sciences, University of Southampton, SO17 1BJ,
  U.K.}
\email{i.j.leary@soton.ac.uk}

\author{Jason Semeraro}
\address{Department of Mathematics, Loughborough University, LE11 3TT,
  U.K.}
\email{j.p.semeraro@lboro.ac.uk}

\begin{abstract}
If $\cF$ is a saturated fusion system on a finite $p$-group $S$, we define the Chern subring $\Ch(\cF)$ of $\cF$ to be the subring of $H^*(S;\FF_p)$ generated by Chern classes of $\cF$-stable representations of $S$. We show that $\Ch(\cF)$ is contained in $H^*(\cF;\FF_p)$ and apply a result of Green and the first author to describe its maximal ideal spectrum in terms of a certain category of elementary abelian subgroups. We obtain similar results for various related subrings, including those generated by characteristic classes of $\cF$-stable $S$-sets.
\end{abstract}

\keywords{fusion systems, Chern classes, support varieties}

\subjclass[2010]{20J06}

\date{\today}

\maketitle

\pagestyle{myheadings}
\markboth{IAN J. LEARY AND JASON SEMERARO}{SPECTRA OF SUBRINGS FOR FUSION SYSTEMS}

\section{Introduction}\label{s:intro}

Let $G$ be a finite group and $k$ be a field of characteristic~$p$. Quillen's description \cite{Q71} of the spectrum of the mod-$p$ cohomology ring of $G$ has been extremely useful in representation theory. The support variety of a $kG$-module
$M$ is a subvariety of the spectrum, subvarieties correspond to ideals
in the cohomology ring, and the ideal defining the support variety is
the kernel of the ring homomorphism $H^*(G)=\Ext_{\FF_p G}(\FF_p,\FF_p)\rightarrow
\Ext_{kG}(M,M)$. Just as ideals of $H^*(G)$ correspond to subvarieties
of the spectrum, subrings of $H^*(G)$ correspond to quotients of the
spectrum, with subrings over which $H^*(G)$ is integral corresponding to
quotients with finite fibres.  In~\cite{GL98} Green and the first named author gave a description of the spectra of subrings of $H^*(G)$ that
are both `large' and `natural', paying particular attention to the
subring generated by Chern classes of representations of $G$.  A later
article applied these results to the subring of $H^*(G)$ generated by
characteristic classes of homomorphisms
from $G$ to the symmetric group $\Sigma_n$ for all $n$~\cite{GLS02}.

Our aim is to extend many of these results concerning subrings of
$H^*(G)$ to analogous results for subrings of the cohomology of a
saturated fusion system.  Recall that a \textit{fusion system} $\cF$
on a finite $p$-group $S$ is a category with Ob$(\cF)=\{P \le S\}$ and
Mor$(\cF)$ a set of injective group homomorphisms between subgroups
satisfying some weak axioms. $\cF$ is \textit{saturated} if it
satisfies two additional `Sylow' axioms which hold whenever
$\cF=\cF_S(G)$ is the fusion system of a group $G$ with Sylow
$p$-subgroup $S$, where morphisms are given by $G$-conjugation
maps. The cohomology $H^*(\cF)$ of a saturated fusion system $\cF$ is
defined to be the subring of $\cF$-stable elements in $H^*(S)$ (see
Section \ref{ss:coh1}).

We now fix a finite $p$-group $S$ and let $\cF$ be a saturated fusion
system on $S$. We define categories of elementary abelian subgroups of
$S$ by stipulating that an injective homomorphism $f \in
\Hom(E_1,E_2)$ is in
\begin{itemize}
\item[] $\cE(\cF)$ iff there exists $\varphi \in \cF$ such that $f(e)=\varphi(e)$ for
  all $e \in E_1$;
\item[] $\cE'(\cF)$ iff for each  $e \in E_1$
 there exists  $\varphi \in \cF$ such that $f(e)=\varphi(e)$;
\item[] $\cE_\RR'(\cF)$ iff for each $e \in E_1$ there exists $\varphi
  \in \cF$ such that $f(e) \in \{\varphi(e), \varphi(e^{-1})\}$;
\item[] $\cE_P'(\cF)$ iff for all  $e \in E_1$, $\langle e \rangle$ and  
$\langle f(e)\rangle$  are  $\cF$-conjugate;
 \item[] $\cA(\cF)$ iff $f(U)$ is $\cF$-conjugate to $U$ for all $U \le E_1$.   
 \end{itemize}
Here by $\langle e\rangle$ we mean the subgroup generated by $e$. Note that  $f \in \cE(\cF)$ is equivalent to $f \in \Hom_\cF(E_1,E_2)$ and that $\varphi$ depends on the choice of $e$ for the categories $\cE'(\cF)$ and $\cE'_\RR(\cF)$.

Our results describe various spectra of subrings of $H^*(\cF)$ in
terms of the above categories. Assume that $k$ is algebraically
closed, and for a finitely generated commutative $\FF_p$-algebra $R$
write $V_R(k):=\Hom(R,k)$ for the variety of ring homomorphisms from
$R$ to $k$ with the Zariski topology generated by closed sets of
form $$\{\phi \in \Hom(R,k) \mid \ker(\phi) \supseteq I\},$$ for an
ideal $I \unlhd R$. Note that any ring homomorphism $f: R \rightarrow
R'$ determines a mapping of varieties $$f^*: V_{R'}(k) \rightarrow
V_R(k), \mbox{ given by } \phi \mapsto \phi \circ f.$$ Moreover, if $R'$
is a finitely generated $f(R)$-module, then $f^*$ has finite
fibres. Note also that there is a continuous map $$V_R(k) \rightarrow
\Spec(R), \mbox{ given by } \phi \mapsto \ker(\phi).$$ Depending on
the choice of $k$ this can be made surjective. Since $H^*(S)$ is a
graded finitely generated $\FF_p$-algebra, $h^*(S):=H^*(S)/\sqrt{0}$
is a finitely generated \textit{commutative} $\FF_p$-algebra (here
$\sqrt{0}$ denotes the ideal generated by elements which square to
$0$) and we write $$X_S(k):=V_{h^*(S)}(k)=\Hom(h^*(S),k)$$ for the
associated variety. Note that a group homomorphism $f: P \rightarrow
Q$ induces a continuous map $f_*: X_P(k) \rightarrow X_Q(k)$ between
the associated varieties.  With the above terminology, Linckelmann has
shown \cite{L17} that there exists a
homeomorphism $$\colim_{\substack{\cE(\cF)}} X_E(k) \rightarrow
V_R(k)$$ where $R=H^*(\cF) \subseteq H^*(S)$. This is an analogue of
Quillen's description of the spectrum of $H^*(G)$ mentioned above.  In
Section \ref{ss:coh2} we consider subrings of $H^*(\cF)$ generated by
Chern classes of $\cF$-stable ordinary representations of $S$: those
for which the associated character is constant on $\cF$-conjugacy
classes (see Section \ref{s:fstable}). Our first main result may be
viewed as an analogue of \cite[Proposition 7.1]{GL98} for fusion
systems:
\begin{thm}\label{t:main1}
    Let $\cF$ be a saturated fusion system on a finite $p$-group $S$
    and let $R$ be the subring of $H^*(\cF)$ generated by Chern
    classes of:
\begin{itemize}

    \item[(1)]  representations of $S$;
    \item[(2)]  real representations of $S$;
    \item[(3)]  permutation representations of $S$,

\end{itemize}
which are $\cF$-stable. Then in each case, there is a homeomorphism $$\colim_{\substack{\cC(R)}} X_E(k) \rightarrow V_R(k)$$  where the category $\cC(R)$ is:
$$(1) \ \cE'(\cF); \ (2) \ \cE_\RR'(\cF);  \ (3) \ \cE_P'(\cF).  $$
\end{thm}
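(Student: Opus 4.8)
The plan is to realise $V_R(k)$ as a quotient of Linckelmann's colimit for $H^*(\cF)$ and to identify the extra collapsing with the extra morphisms in $\cC(R)$. First I would record that in all three cases the regular representation $\CC S$ is $\cF$-stable (its character vanishes off the identity) and is simultaneously complex, real, and a permutation representation, so its Chern classes lie in $R$; restricting $\CC S$ to an elementary abelian $E$ gives $(|S|/|E|)$ copies of the regular representation of $E$, whose top nonzero Chern class is a power of $\prod_{0\neq\chi\in E^*}c_1(\chi)$. By the usual Venkov/Quillen argument this shows $H^*(S)$, hence $H^*(\cF)$, is integral over $R$, so (as $k$ is algebraically closed) the inclusion $R\subseteq H^*(\cF)$ induces a finite surjection $V_{H^*(\cF)}(k)\to V_R(k)$, $\phi\mapsto\phi|_R$. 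Composing with Linckelmann's homeomorphism $\colim_{\cE(\cF)}X_E(k)\to V_{H^*(\cF)}(k)$ exhibits $V_R(k)$ as a quotient of $\bigsqcup_E X_E(k)$; the content of the theorem is that the fibres are exactly the orbits of the morphisms in $\cC(R)$.

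For the forward direction I would show each morphism of $\cC(R)$ acts trivially on $R$-points. Write $\rho|_E=\bigoplus_{\chi\in M_\rho(E)}\chi$ as a multiset $M_\rho(E)\subseteq E^*$ of linear characters, so the total Chern class is $\prod_{\chi}(1+c_1(\chi))$ and the Chern classes are the elementary symmetric functions of the $c_1(\chi)$; evaluating at $v\in X_E(k)$ records precisely the multiset $\{v(\chi):\chi\in M_\rho(E)\}\subseteq k$. If $f\colon E_1\to E_2$ lies in $\cE'(\cF)$ then $f(e)$ is $\cF$-conjugate to $e$ for every $e$, so $\cF$-stability gives $\chi_\rho(e)=\chi_\rho(f(e))$, i.e.\ $\sum_{\chi\in M_\rho(E_1)}\chi=\sum_{\psi\in M_\rho(E_2)}f^*\psi$ as functions on $E_1$. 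Distinct linear characters are linearly independent, so this forces the multiset identity $M_\rho(E_1)=f^*M_\rho(E_2)$ in $\ZZ_{\ge 0}[E_1^*]$, and evaluating at $v$ (using $f_*v=v\circ f^*$) shows $v$ and $f_*v$ have the same image in $V_R(k)$. Thus there is a well-defined continuous surjection $\colim_{\cE'(\cF)}X_E(k)\to V_R(k)$.

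The essential point is injectivity. Since every $v\in X_E(k)$ factors through a unique minimal $E'\le E$ on which it is generic, and inclusions lie in $\cE'(\cF)$, I may assume $v\in X_{E_1}(k)$ and $w\in X_{E_2}(k)$ are generic, i.e.\ $v\colon E_1^*\hookrightarrow k$ and $w\colon E_2^*\hookrightarrow k$ are injective. Applying the hypothesis to $\CC S$ gives $(|S|/|E_1|)\sum_{\lambda\in v(E_1^*)}[\lambda]=(|S|/|E_2|)\sum_{\mu\in w(E_2^*)}[\mu]$ in $\ZZ_{\ge 0}[k]$, whence $\dim E_1=\dim E_2$ and $v(E_1^*)=w(E_2^*)$. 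Then $h:=v^{-1}w\colon E_2^*\to E_1^*$ is an isomorphism with $v\circ h=w$, and injectivity of $v_*$ promotes the equality of multisets $v_*M_\rho(E_1)=w_*M_\rho(E_2)$ to $M_\rho(E_1)=h\,M_\rho(E_2)$ for every $\cF$-stable $\rho$. Dualising, $f:=h^*\colon E_1\to E_2$ is an isomorphism with $f_*v=w$ and $\chi_\rho(e)=\chi_\rho(f(e))$ for all $\cF$-stable $\rho$ and all $e\in E_1$.

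At this point everything reduces to the separation statement that I expect to be the main obstacle: if $\chi(e)=\chi(e')$ for every $\cF$-stable character $\chi$, then $e$ and $e'$ are $\cF$-conjugate; granting this, $f\in\cE'(\cF)$ and $(E_1,v)\sim(E_2,w)$ in the colimit, giving injectivity. This separation is equivalent to the assertion that the $\cF$-stable characters span the space of class functions on $S$ constant on $\cF$-classes, which I would deduce from the $\cF$-stabilisation idempotent furnished by the characteristic idempotent of the saturated fusion system (or cite directly). Finally the continuous bijection is a homeomorphism because each $X_E(k)\to V_R(k)$ is a finite morphism, so the map out of the finite colimit is closed, exactly as in \cite{GL98}. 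The real and permutation cases run identically once the character computation on $E$ is adjusted: for a real representation $c_1(\bar\chi)=-c_1(\chi)$, so the Chern classes are symmetric functions in the $c_1(\chi)^2$ and only the multiset $\{\pm v(\chi)\}$ is recovered, matching the inversion in $\cE_\RR'(\cF)$; for a permutation representation $\chi_\rho(e)=|\Fix(\langle e\rangle)|$ depends only on $\langle e\rangle$, matching $\cE_P'(\cF)$. In each case one replaces the complex separation statement by its real, respectively Burnside-ring (table of marks), analogue for $\cF$-stable objects.
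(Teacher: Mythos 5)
Your proposal is correct in outline but takes a genuinely different route from the paper. The paper's proof is short because it treats the stratification machinery as a black box: it checks that each $R$ is large and natural, invokes \cite[Theorem 6.1]{GL98} (Theorem \ref{t:glmain}) for the existence of the homeomorphism, and uses Lemma \ref{l:glcid} to reduce the identification of $\cC(R)$ to the statement that $f\in\cC(R)$ iff $\chi(e)=\chi(f(e))$ for all relevant characters $\chi$; the entire content of Propositions \ref{p:allreps}, \ref{p:realreps} and \ref{p:permreps} is then three character-separation statements. You instead re-derive the relevant special case of that machinery from scratch: integrality of $H^*(\cF)$ over $R$ via the regular representation, Linckelmann's homeomorphism to present $V_R(k)$ as a quotient of $\bigsqcup_E X_E(k)$, and an explicit computation of what the Chern classes record at a generic point of $X_E(k)$ (the multiset of linear characters of $\rho|_E$, recovered from its elementary symmetric functions), from which you reconstruct the identifying isomorphism $f=h^*$. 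This buys self-containedness and makes transparent exactly which data each subring sees on each stratum; the cost is that you reprove rather than reuse \cite{GL98} (note the paper works with $R\subseteq H^*(S)$ and Quillen's stratification for the finite group $S$, so Linckelmann's theorem is not actually needed here). The one place you are thinner than the paper is precisely where the paper concentrates its effort, namely the separation statements. For case (1) you correctly identify Corollary \ref{c:rankconj} (i.e.\ \cite[Lemma 2.1]{BC20}). For case (2) the real analogue is not purely formal: the paper derives it from the complex case by noting that $\chi+\overline{\chi}$ and $\chi\overline{\chi}$ are real $\cF$-stable characters that determine the pair $\{\chi(g),\overline{\chi(g)}\}$ as the roots of a quadratic; you should supply this short argument. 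For case (3) your ``table of marks analogue'' is exactly Reeh's theorem on $\cF$-stable $S$-sets (Proposition \ref{p:reehmonoid}), a substantive external input which the paper applies explicitly through the sets $\alpha_{\langle e\rangle}$ and their fixed-point properties. With these three lemmas filled in, your argument is complete.
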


To prove Theorem \ref{t:main1}, we first observe that in each case the
subring $R$ is both large and natural (see Definition
\ref{d:largenat}). We then apply a result of Green and the first named
author, Theorem \ref{t:glmain}, to deduce the existence of a category
$\cC(R)$ as in the conclusion of Theorem \ref{t:main1}. To describe
the morphisms in $\cC(R)$ we exploit the fact that they are uniquely
determined by how they interact with the characters of the
representations we consider (see Lemma \ref{l:glcid}). In case (3), we
rely on the existence of an explicit basis for the ring of
$\cF$-stable permutation characters of $S$ determined by Reeh
\cite{R15}.

In~\cite{GLS02} the authors study, for a finite group $G$, the variety
for the subring $\S(G)$ of $H^*(G)$ generated by the images of the
maps $\rho^*:H^*(\Sigma_n)\rightarrow H^*(G)$ (there the ring $\cS(G)$
was denoted by $S_h(G)$ which clashes with our use of $S$ as a Sylow
$p$-subgroup). Our second main result may be regarded as an analogue of
\cite[Theorem 2.6]{GLS02} for fusion systems:

\begin{thm}\label{t:main2}
    Let $\cF$ be a saturated fusion system on a finite $p$-group $S$
    and let $R$ be the subring of $H^*(S)$ generated by characteristic
    classes of $\cF$-stable permutations of $S$. Then there is a
    homeomorphism:
$$\colim_{\substack{\cA(\cF)}} X_E(k) \rightarrow V_R(k).$$  
\end{thm}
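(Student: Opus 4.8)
The plan is to follow the same three steps used for Theorem \ref{t:main1}: check that $R$ is large and natural (Definition \ref{d:largenat}), invoke Theorem \ref{t:glmain} to obtain a category $\cC(R)$ with $\colim_{\cC(R)}X_E(k)\xrightarrow{\sim}V_R(k)$, and then identify $\cC(R)$ with $\cA(\cF)$. Naturality is immediate because $R$ is generated by characteristic classes, which are natural; the same naturality together with $\cF$-stability of the defining representations also shows $R\subseteq H^*(\cF)$. For largeness I would first note that the regular $S$-set is itself $\cF$-stable: its restriction along any $\varphi\in\Hom_\cF(P,S)$ is a free $P$-set of the same rank as $\res_P S$, hence isomorphic to it. Being free, its restriction $\res_E S$ to each elementary abelian $E\le S$ is a free $E$-set, whose characteristic classes already generate a subring of $H^*(E)$ over which $H^*(E)$ is integral (the Dickson-type computation in \cite{GLS02}). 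Largeness of $R$ then follows by detection on elementary abelian subgroups.

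By Theorem \ref{t:glmain}, together with the identification of morphisms in Lemma \ref{l:glcid}, a homomorphism $f\colon E_1\to E_2$ lies in $\cC(R)$ exactly when $\res_{E_1}|_R=f^*\circ\res_{E_2}|_R$. Since $R$ is generated by the classes $\rho^*(H^*(\Sigma_n))$ for $\cF$-stable $S$-sets $X$ with associated permutation representation $\rho=\rho_X$ and $n=|X|$, naturality rewrites this as the requirement that, for every $\cF$-stable $X$, the two permutation representations $\res_{E_1}X$ and $f^*\res_{E_2}X$ of $E_1$ induce the same homomorphism on $H^*(\Sigma_{|X|})$. Here I would import the key cohomological input of \cite{GLS02}: for an elementary abelian group $E$, the characteristic classes of a permutation representation detect the isomorphism type of the underlying $E$-set, equivalently the fixed-point numbers $|Y^U|$ for all $U\le E$. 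This is precisely the point that separates $\cA(\cF)$ from the finer categories of Theorem \ref{t:main1}: the full image of $H^*(\Sigma_n)$, unlike a single characteristic class, records every orbit type rather than only the nontrivial characters. Consequently the morphism condition becomes $\res_{E_1}X\cong f^*\res_{E_2}X$ as $E_1$-sets for every $\cF$-stable $X$, that is, $|X^U|=|X^{f(U)}|$ for all $U\le E_1$ and all $\cF$-stable $X$.

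It then remains to identify this fixed-point condition with membership in $\cA(\cF)$, i.e.\ with $f(U)\sim_\cF U$ for all $U\le E_1$. The forward implication is the easy direction: if $f(U)$ is $\cF$-conjugate to $U$, then $\cF$-stability of $X$ gives $|X^U|=|X^{f(U)}|$ directly, this being the fixed-point characterisation of $\cF$-stable $S$-sets. For the converse I would use Reeh's basis \cite{R15} of the Burnside ring of $\cF$-stable $S$-sets: the associated mark (fixed-point) homomorphism is unitriangular with respect to $\cF$-conjugacy classes of subgroups, hence injective, so the collection $\{|X^U|\}$ as $X$ ranges over $\cF$-stable $S$-sets determines the $\cF$-conjugacy class of $U$. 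Thus $|X^U|=|X^{f(U)}|$ for all such $X$ forces $U\sim_\cF f(U)$, completing the identification $\cC(R)=\cA(\cF)$ and hence the theorem.

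The main obstacle is the cohomological detection statement used in the second step, namely that the full ring of characteristic classes of permutation representations of an elementary abelian group recovers the isomorphism type of the $E$-set and not merely its character. Establishing this separation property, or correctly citing it from \cite{GLS02}, is what pins the category down to $\cA(\cF)$; once it is available, the passage from fixed-point data to $\cF$-conjugacy is a formal consequence of Reeh's unitriangular basis.
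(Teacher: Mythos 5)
Your proposal is correct and follows essentially the same route as the paper: verify that the permutation subring is large and natural, apply Theorem \ref{t:glmain}, and then identify $\cC(R)$ with $\cA(\cF)$ by combining the detection result of \cite[Section 9]{GL98} for images of $H^*(\Sigma_n)$ in $h^*(E)$ with the fixed-point (unitriangularity) properties of Reeh's $\cF$-stable sets $\alpha_P$ from Proposition \ref{p:reehmonoid}. The cohomological detection statement you flag as the main obstacle is exactly the ingredient the paper imports from \cite{GL98} in the proof of Lemma \ref{l:sfident}, so there is no gap.
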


Our argument to prove Theorem \ref{t:main2} is an adaptation of that
found in \cite{GLS02}, for finite groups, and relies on particular
properties of Reeh's basis of $\cF$-stable permutation characters of
$S$. As for the subrings considered in Theorem
\ref{t:main1}, we show in Corollary \ref{cor:conj63} that the ring $R$ in
Theorem \ref{t:main2} can be described in terms of images in
cohomology of maps between classifying spaces.

We close the introduction with some remarks pertaining to a possible
extension of Theorems \ref{t:main1} and \ref{t:main2} to the case of
fusion systems on infinite groups. Indeed, the main result of
\cite{GL98} is concerned with varieties for the cohomology of
\textit{any} compact Lie group. The fusion system of a such a group is
a particular example of a \textit{$p$-local compact group} which is a
saturated fusion system on a discrete $p$-toral group (a $p$-group
with a finite index infinite torus $(\ZZ/p^\infty)^n$, for some $n \ge
1$). There is a version of Quillen stratification for such groups (see
\cite[Theorem 5.1]{BCHV19}), and it has been shown that certain
classes of $p$-local compact groups, for example those coming from
finite loop spaces and $p$-compact groups, admit unitary embeddings,
at least in one of two possible senses (see \cite{CC17}). Observe that
the existence of unitary embeddings for compact Lie groups is a key
ingredient in the proof of \cite[Proposition 2.2]{GL98}.

\textbf{Acknowledgements}: The authors thank Assaf Libman for drawing
their attention to~\cite{CL09}, which enabled them to resolve a
conjecture posed in an earlier version of this article.  
The second author gratefully acknowledges
funding from the UK Research Council EPSRC for the project
EP/W028794/1.  The first author visited Universit\"at Bielefeld while
working on this article, and gratefully acknowledges their
hospitality.  The visit was supported by a grant from the Deutsche
Forschungsgemeinschaft (DFG), Project-ID 491392403 -- TRR~358.

\section{$\cF$-stable representations and group actions}\label{s:fstable}

We adopt standard notation for fusion systems as found, for example, in \cite{AKO11}. Let $\cF$ be a saturated fusion system on a finite $p$-group $S$. 

\subsection{The ring of $\cF$-stable representations}
Recall that an object $P \le S$ is \textit{$\cF$-centric} if for all
morphisms $\varphi \in \Hom_\cF(P,S)$, we have $C_S(\varphi(P)) \le
\varphi(P)$. Let $\cF^c$ denote the set of $\cF$-centric subgroups.

\begin{defn}
The \textit{orbit category} $\cO=\cO(\cF)$ of $\cF$ is the category defined via:
\begin{itemize}
\item[(a)] $\ob(\cO)=\{P \mid P \le S \}$;
\item[(b)] for each $P,Q \le S$,
  $\Hom_\cO(P,Q)=\Rep_\cF(P,Q):=\Hom_\cF(P,Q)/\Inn(Q)$ is the set of
  $\Inn(Q)$-orbits of $\Hom_\cF(P,Q)$ (with action given by right
  composition of morphisms).
\end{itemize} 
The centric orbit category $\cO(\cF^c)$ is the full subcategory of
$\cO$ with object set $\cF^c$.
\end{defn}

\begin{defn}
An ordinary character $\chi$ of $S$ is \textit{$\cF$-stable} if for all $g
\in S$ and morphisms $\varphi \in \Hom_{\cF}(\langle g \rangle,S)$,
$\chi(\varphi(g))=\chi(g)$. That is, $\chi$ takes the same value on
all members of each $\cF$-conjugacy class of $S$. Denote by $\C(\cF)$
the subring of $\C(S)$ (the character ring of $S$) consisting of
$\cF$-stable characters. Also, for a natural number $n$, denote
by $\C_n(S)$ and $\C_n(\cF)$ the subsets of characters of degree $n$.
\end{defn}

Following \cite{CCM20} for any group $G$, let $\Rep_n(G)=\Rep(G,U(n))$
denote the set of isomorphism classes of $n$-dimensional ordinary
representations of $G$. Let $R(G)$ denote the representation ring of
$G$.

\begin{defn}
A complex representation $\rho$ of $S$ is
\textit{$\cF$-fusion preserving} if $\rho|_P=\rho|_{\varphi(P)} \circ
\varphi \in \Rep_n(P)$ for any $P \le S$ and $\varphi \in
\Hom_\cF(P,S)$; let $\Rep_n(\cF)$ denote the set of isomorphism
classes of $n$-dimensional complex $\cF$-fusion preserving
representations of $S$
\end{defn}

Note that $\rho \in \Rep_n(\cF)$ if and only if $\chi_\rho \in
\C_n(\cF)$ where $\chi_\rho$ is the character associated to
$\rho$. Using the Alperin-Goldschmidt fusion theorem for fusion
systems, one can show:

\begin{prop}\label{p:repfact} Let $\cF$ be a saturated fusion system over $S$.
  Then $$\lim_{\substack{\longleftarrow \\ \cO(\cF^c)}} \Rep_n(P) \cong \Rep_n(\cF).$$
\end{prop}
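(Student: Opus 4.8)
The plan is to realize the claimed isomorphism as an explicit restriction map and then check it is a bijection, with the Alperin--Goldschmidt theorem carrying the weight of the surjectivity argument. First I would verify that $P \mapsto \Rep_n(P)$ is a well-defined contravariant functor on $\cO(\cF^c)$: a morphism $[\varphi] \in \Rep_\cF(P,Q)$ should act by $\sigma \mapsto \sigma \circ \varphi$, and this is independent of the choice of representative $\varphi$ within its $\Inn(Q)$-orbit because replacing $\varphi$ by $c_q \circ \varphi$ alters $\sigma \circ \varphi$ only by conjugation by the fixed matrix $\sigma(q)$, hence leaves its isomorphism class fixed. An element of the inverse limit is then a family $(\rho_P)_{P \in \cF^c}$ with $\rho_Q \circ \varphi \cong \rho_P$ for every $\varphi \in \Hom_\cF(P,Q)$. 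I would define the comparison map by sending an $\cF$-fusion-preserving $\rho \in \Rep_n(\cF)$ to the family $([\rho|_P])_{P \in \cF^c}$; viewing a morphism $\varphi \colon P \to Q$ as a morphism with image in $S$, the defining condition $\rho|_P \cong \rho|_{\varphi(P)} \circ \varphi$ of $\Rep_n(\cF)$ is exactly the compatibility relation for this family.

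Injectivity is then immediate after observing that $S$ is itself $\cF$-centric, since $C_S(\varphi(S)) = Z(S) \le S$ for every $\varphi \in \Aut_\cF(S)$. Evaluating a compatible family at the object $S$ recovers a representation of $S$, so two fusion-preserving representations with isomorphic restrictions to every centric subgroup in particular have isomorphic restrictions at $S$, i.e.\ are isomorphic.

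The substance lies in surjectivity. Given a compatible family $(\rho_P)$, the only candidate preimage is $\rho := \rho_S$, and I must check two things: that $\rho$ restricts to the given $\rho_P$ on each centric $P$, and that $\rho$ is $\cF$-fusion-preserving. The first follows by feeding the inclusion $P \hookrightarrow S$, which is a morphism of $\cO(\cF^c)$ whenever $P$ is centric, into the compatibility relation. The second is the main obstacle: the fusion-preserving condition must be established for \emph{all} $P \le S$, including non-centric ones, whereas the family supplies data only over the centric orbit category. Here I would invoke the Alperin--Goldschmidt fusion theorem: every $\varphi \in \Hom_\cF(P,S)$ is a composite of restrictions of automorphisms $\alpha \in \Aut_\cF(E)$ with $E$ either $S$ or essential, and essential subgroups are $\cF$-centric. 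For each such $\alpha$ the compatibility relation gives an isomorphism $\rho|_E \circ \alpha \cong \rho|_E$ implemented by a single intertwining matrix; restricting that intertwiner to any $U \le E$ yields $\rho|_U \cong \rho|_{\alpha(U)} \circ \alpha|_U$. Composing these identities along the Alperin--Goldschmidt factorization of $\varphi$ then gives $\rho|_P \cong \rho|_{\varphi(P)} \circ \varphi$ in general. The reason the centric orbit category suffices, rather than the full orbit category, is precisely that the generating automorphisms produced by Alperin--Goldschmidt all live on centric subgroups.
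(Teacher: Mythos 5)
Your proposal is correct and follows essentially the same route as the paper: the paper likewise reduces everything to the tautological identification of $\Rep_n(\cF)$ with the limit over all of $\cF$, invokes the Alperin--Goldschmidt fusion theorem to pass to the centric subcategory, and uses the insensitivity of isomorphism classes of representations to inner automorphisms to descend to the orbit category. Your write-up merely makes explicit (injectivity via evaluation at the centric object $S$, surjectivity via intertwiners along the Alperin--Goldschmidt factorization) what the paper compresses into a reference to \cite[Proposition 3.6]{CCM20}.
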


\begin{proof}
This is a straightforward modification of the argument used to prove \cite[Proposition 3.6]{CCM20} with  $\cF^{cr}$ replaced by $\cF^{c}$.
\end{proof}

Now let $R(\cF)$ be the subring of $\cF$-stable representations in
$R(S)$ and $\C(\cF)$ be the Grothendieck group
of $$\bigcup_{n=1}^\infty \C_n(\cF).$$ 

Write $S^\cF$ for a set of
$\cF$-conjugacy class representatives of $S$.

\begin{thm}\label{t:rankconj}
$\C(\cF) \otimes \CC$ is equal to the space of $\CC$-class functions  on $S^\cF$.
\end{thm}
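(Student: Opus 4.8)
The plan is to reduce the statement to a dimension count and then settle it by Galois descent, comparing two rational structures on the space of class functions. Throughout, write $Z_\cF$ for the space of $\CC$-valued class functions on $S$ that are constant on $\cF$-conjugacy classes, so that $\dim_\CC Z_\cF=|S^\cF|$ and $Z_\cF$ is exactly the target of the theorem. First I would record that $C(\cF)$ coincides with the full lattice $R(S)\cap Z_\cF$ of $\cF$-stable virtual characters of $S$: the regular character of $S$ is $\cF$-stable (it is supported on the identity) and pairs positively with every irreducible, so adding a large multiple of it turns any $\cF$-stable virtual character into a genuine $\cF$-stable character; hence every element of $R(S)\cap Z_\cF$ is a difference of two members of $\bigcup_n C_n(\cF)$. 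Since each $\cF$-stable character lies in $Z_\cF$, the inclusion $C(\cF)\otimes\CC\subseteq Z_\cF$ is clear, and the theorem becomes the equality of dimensions $\dim_\QQ\bigl(C(\cF)\otimes\QQ\bigr)=|S^\cF|$. The point to watch is that this is a genuine rationality assertion: $Z_\cF$ is obviously spanned by rational \emph{class functions} (the indicators of $\cF$-classes), but we must span it by rational combinations of \emph{characters}, that is, by elements of the lattice $R(S)$.

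The tool I would use is the family of power operations. Fix $K=\QQ(\zeta_{|S|})$ and $\Gamma=\Gal(K/\QQ)\cong(\ZZ/|S|)^\times$, and for $t$ coprime to $|S|$ let $\psi_t$ be the $\CC$-linear operator on class functions given by $(\psi_t f)(g)=f(g^t)$; this is well defined since $g\mapsto g^t$ is a bijection of $S$. Two facts are immediate and are the heart of the matter. First, $\psi_t$ preserves $R(S)$, because for a character $\chi$ one has $(\psi_t\chi)(g)=\chi(g^t)=\sigma_t(\chi(g))$, so $\psi_t\chi$ is the Galois conjugate $\sigma_t\chi$, again a character. Second, $\psi_t$ preserves $\cF$-stability: if $f\in Z_\cF$ and $\varphi\in\Hom_\cF(\langle g\rangle,S)$, then $\varphi(g)^t=\varphi(g^t)$ and $\varphi$ restricts to a morphism on $\langle g^t\rangle$, so $(\psi_t f)(\varphi(g))=f(\varphi(g^t))=f(g^t)=(\psi_t f)(g)$. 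Here the closure of $\cF$ under restriction of morphisms is exactly what makes the computation go through.

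With these in hand I would run Galois descent on the $K$-vector space $\mathcal{Z}(S,K)$ of $K$-valued class functions on $S$. It carries the naive semilinear $\Gamma$-action $\rho_1$ on values, whose fixed space is the space of $\QQ$-valued class functions, and the twisted action $\rho_2(\sigma_t):=\rho_1(\sigma_t)\circ\psi_t^{-1}$, which is again a semilinear $\Gamma$-action since $\psi_t$ and $\rho_1(\sigma_s)$ commute. Because $\sigma_t\chi=\psi_t\chi$ for characters, every element of $R(S)$ is $\rho_2$-fixed, and a dimension count identifies the fixed space of $\rho_2$ with $R(S)\otimes\QQ$. Now let $W\subseteq\mathcal{Z}(S,K)$ be the $K$-valued $\cF$-class functions, a $K$-form of $Z_\cF$ of $K$-dimension $|S^\cF|$. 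The action $\rho_2$ preserves $W$, which is precisely the second fact above. Galois descent applied to $\rho_2$ on $W$ then yields $\dim_\QQ\bigl(W\cap(R(S)\otimes\QQ)\bigr)=\dim_K W=|S^\cF|$. Finally $W\cap(R(S)\otimes\QQ)=\{\,f\in R(S)\otimes\QQ: f\text{ is }\cF\text{-stable}\,\}=C(\cF)\otimes\QQ$ (clear denominators to descend from $\QQ$ to $\ZZ$), so $\dim_\QQ C(\cF)\otimes\QQ=|S^\cF|=\dim_\CC Z_\cF$, and the inclusion $C(\cF)\otimes\CC\subseteq Z_\cF$ forces equality.

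The step I expect to be the crux is the compatibility in the second paragraph: that the power operations $\psi_t$ respect $\cF$-stability while simultaneously preserving the character lattice, together with the correct bookkeeping of the two rational structures as fixed spaces of $\rho_1$ and $\rho_2$. As a consistency check, in the realizable case $\cF=\cF_S(G)$ the result can be seen directly and without descent: any $\cF$-class function on $S$ extends by zero to a class function on $G$, hence is a $\CC$-combination of characters of $G$ whose restrictions to $S$ are $\cF$-stable. This already gives $C(\cF)\otimes\CC=Z_\cF$ for group fusion systems, but it says nothing about exotic $\cF$, which is exactly what the descent argument treats uniformly.
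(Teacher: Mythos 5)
Your proposal is correct, and every step checks out: the identification $C(\cF)=R(S)\cap Z_\cF$ via adding multiples of the regular character, the two key compatibilities of the power operations $\psi_t$ (that $\psi_t\chi=\sigma_t\circ\chi$ preserves the character lattice, and that $\varphi(g)^t=\varphi(g^t)$ preserves $\cF$-stability), the verification that $\rho_2(\sigma_t)=\rho_1(\sigma_t)\circ\psi_t^{-1}$ is a genuine semilinear action with fixed space $R(S)\otimes\QQ$ by a dimension count against $\Irr(S)$, and the descent on the $\rho_2$-stable subspace $W$. However, the paper does not prove this statement at all: its ``proof'' is the citation to \cite[Lemma 2.1]{BC20}, and the arguments in the literature for this rank computation typically run either through the characteristic idempotent of the saturated fusion system in the $p$-localised double Burnside ring (used as a projection of class functions onto the $\cF$-stable ones) or, in the realizable case, through the extension-by-zero argument you give as your consistency check. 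Your Galois-descent route is therefore genuinely different, and it buys two things: it is self-contained and elementary (no bisets, no Ragnarsson--Stancu idempotent), and it nowhere uses saturation of $\cF$ --- only that $\cF$-conjugacy of elements is an equivalence relation, which already follows from the basic fusion-system axioms --- whereas the characteristic idempotent exists only for saturated systems. The price is that your argument is purely a dimension count and does not produce the explicit projection onto $\cF$-stable class functions that the idempotent approach yields; for the purposes of Theorem~\ref{t:rankconj} and its use in Corollary~\ref{c:rankconj}, the dimension count is all that is needed.
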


\begin{proof}
See \cite[Lemma 2.1]{BC20}.
\end{proof}

From this result, we easily deduce that two elements are
$\cF$-conjugate if their character values coincide:

\begin{cor}\label{c:rankconj}
If $s,t \in S$ are such that $\chi(s)=\chi(t)$ for all $\chi \in
\C(\cF)$, then $s$ and $t$ are $\cF$-conjugate.
\end{cor}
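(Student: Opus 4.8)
The plan is to prove the contrapositive: I want to show that if $s$ and $t$ are \emph{not} $\cF$-conjugate, then there exists some $\chi \in C(\cF)$ with $\chi(s) \neq \chi(t)$. This reduces the statement about $\cF$-stable \emph{virtual} characters to a separation property on the set $S^\cF$ of $\cF$-conjugacy class representatives.

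First I would invoke Theorem \ref{t:rankconj}, which identifies $C(\cF) \otimes \CC$ with the full space of $\CC$-valued class functions on $S^\cF$. Concretely, this means that for any prescribed complex values on the finitely many $\cF$-conjugacy classes, there is a virtual $\cF$-stable character realizing them. The key observation is then elementary: two elements $s, t \in S$ fail to be $\cF$-conjugate precisely when they lie in distinct $\cF$-conjugacy classes, i.e. correspond to distinct points of $S^\cF$. Since the indicator function that takes value $1$ on the class of $s$ and $0$ elsewhere is a $\CC$-class function on $S^\cF$, by Theorem \ref{t:rankconj} it is realized by some element $\psi \in C(\cF) \otimes \CC$, which separates the two classes.

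The one point requiring care is the passage from $C(\cF) \otimes \CC$ back to $C(\cF)$ itself, since a priori the separating function need only be a complex combination of genuine $\cF$-stable characters. To handle this I would note that it suffices to find \emph{some} $\chi \in C(\cF)$ with $\chi(s) \neq \chi(t)$: writing the separating function $\psi = \sum_i \al_i \chi_i$ with $\al_i \in \CC$ and $\chi_i \in C(\cF)$, the inequality $\psi(s) \neq \psi(t)$ forces $\chi_i(s) \neq \chi_i(t)$ for at least one $i$, and that $\chi_i$ already lies in $C(\cF)$. Thus tensoring with $\CC$ introduces no obstruction, because we only need a single separating class rather than a complex one.

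I expect the main (and indeed only) subtlety to be purely formal rather than mathematical: ensuring that the notion of ``$\cF$-conjugate'' coincides with ``same point of $S^\cF$,'' so that the indicator function is genuinely a function on $S^\cF$ and Theorem \ref{t:rankconj} applies directly. Once that bookkeeping is in place, the corollary follows immediately from the surjectivity built into Theorem \ref{t:rankconj}, with no further representation-theoretic input needed.
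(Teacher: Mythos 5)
Your proposal is correct and is essentially the argument the paper has in mind: both use Theorem \ref{t:rankconj} to realize the indicator function of the $\cF$-class of $s$ as a $\CC$-linear combination $\sum_i \al_i\chi_i$ with $\chi_i \in C(\cF)$, and then use linearity to extract a single separating $\chi_i$ (the paper phrases this directly rather than contrapositively, but the content is identical). No gaps.
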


\subsection{The ring of $\cF$-stable $S$-sets}
If $S$ acts on a finite set $X$ and $\phi:P\rightarrow S$ is a
homomorphism, denote by $^\phi X$ the $P$-set $X$ with action given  by $p\cdot x=\phi(p)x$, where the right side is the original action of $S$ on $X$.
\begin{defn}
Let $X$ be a finite $S$-set.
\begin{itemize}
    \item[(1)] $X$ is said to be \textit{$\cF$-stable} if for every
      $P\leq S$ and every morphism $\phi:P\rightarrow S$ in $\cF$ the
      $P$-sets $X$ and $^\phi X$ are isomorphic.
 \item[(2)] $X$ is said to be \textit{linearly $\cF$-stable} if the
   associated permutation character is $\cF$-stable.
\end{itemize}
   \end{defn}

Plainly any $\cF$-stable $S$-set is linearly $\cF$-stable, but the
converse is not true. For example \cite[Section 7]{GL98} discusses an
example of this phenomenon for $G=\GL(3,\FF_p)$.  Note that we may,
equivalently define a homomorphism $\rho:S\rightarrow \Sigma_n$
associated to an  $S$-set $X$ of cardinality $n$, to be $\cF$-stable if
for all $P\leq S$ and all morphisms $\phi:P\rightarrow S$ in $\cF$,
the morphisms $\rho|_P$ and $\rho\circ \phi$ differ by an inner
automorphism of $\Sigma_n$.

 If $X$ is an $S$-set and $Q \le S$, let
 $\Phi_Q(X)=|X^Q|$ denote the number of $Q$-fixed points of $X$. To
 prove Theorem \ref{t:main2} we shall need to know that there are
 sufficiently many $\cF$-stable permutation
 representations. Reeh~\cite{R15} shows the following:

\begin{prop}\label{p:reehmonoid}

For each $P \le S$ there exists an
$\cF$-stable $S$-set $\alpha_P$ with the properties that:
\begin{itemize}
    \item[(1)] $\Phi_Q(\alpha_P)=0$ unless $Q$ is $\cF$-subconjugate
      to $P$;
    \item[(2)] $\Phi_{P'}(\alpha_P)=|N_S(P')/P'|$ when $P'$ is a fully
      $\cF$-normalised $\cF$-conjugate of $P$; and 
      \item[(3)] $\alpha_P \cong \alpha_Q$ as $S$-sets if $P$ and $Q$ are $\cF$-conjugate.
\end{itemize}
\end{prop}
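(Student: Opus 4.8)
The plan is to translate the statement into the language of the Burnside ring $A(S)$ and its mark homomorphism, and then to build $\alpha_P$ by a triangular construction over the poset of $\cF$-conjugacy classes of subgroups. Recall that the mark homomorphism $A(S) \hookrightarrow \prod_{[Q]} \ZZ$, sending an $S$-set $X$ to the vector $(\Phi_Q(X))_Q$ of marks indexed by $S$-conjugacy classes of subgroups, is injective, so that an $S$-set is determined up to isomorphism by its marks. The first step is to record the mark-theoretic meaning of $\cF$-stability: an $S$-set $X$ is $\cF$-stable if and only if the function $Q \mapsto \Phi_Q(X)$ is constant on $\cF$-conjugacy classes of subgroups. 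Indeed, if $\psi : Q \to Q'$ is an isomorphism in $\cF$ then, viewing $\psi$ as a morphism $Q \to S$, one computes $\Phi_Q({}^\psi X) = |X^{\psi(Q)}| = \Phi_{Q'}(X)$; thus $\cF$-stability forces $\Phi_Q(X) = \Phi_{Q'}(X)$, and conversely, if the marks are $\cF$-class functions then ${}^\phi X$ and $X$ have equal marks as $P$-sets for every $\phi : P \to S$ in $\cF$, hence are isomorphic.

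Next I would pin down the two elementary facts about marks of transitive sets that give the construction its triangular shape. Writing $[Q] \le_\cF [P]$ when $Q$ is $\cF$-subconjugate to $P$, one has $\Phi_R(S/Q) \ne 0$ only if $R$ is $S$-subconjugate to $Q$, and $\Phi_Q(S/Q) = |N_S(Q)/Q|$. Consequently, if $\alpha_P$ is to satisfy (1) it must be a non-negative integer combination of transitive sets $S/Q$ with $[Q] \le_\cF [P]$: any $S/T$ with $T \not\le_\cF P$ occurring with positive coefficient would force $\Phi_T(\alpha_P) \ge |N_S(T)/T| > 0$, contradicting (1). Writing $\alpha_P = \sum_{[Q] \le_\cF [P]} a_Q\,(S/Q)$ with $a_Q \in \ZZ_{\ge 0}$, condition (1) then holds automatically, and at the top class the marks are $\Phi_R(\alpha_P) = a_R\,|N_S(R)/R|$ for $R$ in $[P]_\cF$, since the lower transitive sets contribute nothing there.

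The construction of $\alpha_P$ then proceeds by downward induction on the subconjugacy poset. At the top level, to meet (2) and, via the first step, the forced constancy of marks across all of $[P]_\cF$, I set $a_R = |N_S(P')|/|N_S(R)|$ for each $S$-class $R$ inside $[P]_\cF$, where $P'$ is fully $\cF$-normalised. These are positive integers precisely because $S$ is a $p$-group: the orders $|N_S(R)|$ and $|N_S(P')|$ are powers of $p$ with $|N_S(R)| \le |N_S(P')|$ by full normalisation, whence $|N_S(R)| \mid |N_S(P')|$. This choice makes $\Phi_R(\alpha_P) = |N_S(P')/P'|$ for every $R \in [P]_\cF$, establishing (2). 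Descending one level at a time, the terms already chosen determine marks on the next $\cF$-class $[T]_\cF$ which need not be constant, and I would correct them by adding further non-negative multiples $a_T\,(S/T)$, whose only effect at level $[T]_\cF$ is to raise $\Phi_T$ by $a_T\,|N_S(T)/T|$, iterating to the bottom of the poset.

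The main obstacle is to show that these successive corrections can be chosen simultaneously non-negative and integral while equalising each mark function across its $\cF$-class. Non-negativity is easy to arrange by always equalising upward to a common target, but integrality is delicate: one must check that the discrepancy between the common target $M_{[T]}$ and $\Phi_T$ is divisible by the $p$-power $|N_S(T)/T|$ for every $T$ in the class, and that a single target can be met for all representatives at once. This compatibility is not formal; it rests on the Burnside congruences satisfied by the marks of the genuine $S$-set assembled from the higher levels, together with the saturation axioms controlling normaliser orders across an $\cF$-class. Establishing exactly this compatibility---equivalently, that the monoid of $\cF$-stable $S$-sets is free with the $\alpha_P$ as its irreducible generators---is the substance of Reeh's theorem, which I would invoke, or reprove through the induction above, to complete the argument \cite{R15}.
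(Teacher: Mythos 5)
Your proposal is correct and ultimately rests on the same foundation as the paper, whose entire proof is the citation to \cite[Proposition~4.8]{R15}: your outline of the mark-homomorphism characterisation of $\cF$-stability, the triangular construction over the subconjugacy poset, and the top-level coefficients $|N_S(P')|/|N_S(R)|$ is an accurate sketch of Reeh's argument, and you correctly isolate the integrality of the lower-level corrections as the genuinely hard step before deferring to Reeh for it. Since both you and the paper invoke Reeh's result at that point, there is nothing to correct.
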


\begin{proof}
    See \cite[Proposition 4.8]{R15}.
\end{proof}

In fact, Reeh shows that the $S$-sets $\alpha_Q$ as $Q$ ranges over a set 
of $\cF$-class representatives of subgroups of $S$ can be chosen
to form an additive basis for the Burnside ring of $\cF$-stable
$S$-sets, but we will not need this.

\section{Cohomology of fusion systems and the Chern subring}
As in the previous section, we let $\cF$ be a saturated fusion system
on a finite $p$-group $S$.  As shown by Chermak \cite{C13}, to $\cF$ we may
associate a unique (up to isomorphism) centric linking system $\cL$
whose $p$-completed nerve plays the role of the classifying space of
$\cF$. In particular, when $\cF=\cF_S(G)$ is the fusion system of a
finite group with Sylow $p$-subgroup $S$, we have $|\cL|^\wedge_p
\simeq BG^\wedge_p$. The triple $(S,\cF, \cL)$ is sometimes referred
to as a \textit{$p$-local finite group.}
\subsection{The cohomology ring of a fusion system}\label{ss:coh1}

Following \cite[Section 5]{BLO03}, we define the cohomology of $\cF$ as follows:

\begin{defn}\label{d:homfstable}
The subring $H^*(\cF; \FF_p)$ of \textit{$\cF$-stable elements} of
$H^*(S,\FF_p)$ is the preimage in $H^*(S,\FF_p)$ of the natural
map $$H^*(S,\FF_p) \rightarrow \lim_{\substack{\longleftarrow
    \\ \cO(\cF^c)}} H^*(-; \FF_p).$$
\end{defn} From \cite[Theorem 4.2]{CCM20}, we obtain:

\begin{thm}\label{t:cohomology}
There is an isomorphism
$$H^*(|\cL|^\wedge_p; \FF_p) \cong \lim_{\substack{\longleftarrow
    \\ \cO(\cF^c)}} H^*(BP; \FF_p).$$ In particular, the rings
$H^*(\cF; \FF_p)$ and $H^*(|\cL|^\wedge_p; \FF_p)$ are isomorphic.
\end{thm}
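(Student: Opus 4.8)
The plan is to read the displayed isomorphism straight off the citation and then deduce the ``in particular'' clause by a formal comparison with Definition \ref{d:homfstable}.

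First I would note that the isomorphism $H^*(|\cL|^\wedge_p; \FF_p) \cong \lim_{\substack{\longleftarrow \\ \cO(\cF^c)}} H^*(BP; \FF_p)$ is exactly the statement of \cite[Theorem 4.2]{CCM20}, so nothing new is required for this half. For context, this rests on the Broto--Levi--Oliver analysis of the nerve $|\cL|$: one runs the Bousfield--Kan spectral sequence converging to $H^*(|\cL|^\wedge_p)$ whose $E_2$-page is the higher derived limits $\lim^i_{\cO(\cF^c)} H^j(BP)$, and the key input is the vanishing of these higher limits for $i>0$, forcing the spectral sequence to collapse onto the $i=0$ line. I would simply cite this and move on.

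The second step is purely formal. Since $C_S(S) = Z(S) \le S$, the group $S$ is itself $\cF$-centric, hence an object of $\cO(\cF^c)$, and for every $P \in \cF^c$ the inclusion $\iota_P : P \hookrightarrow S$ represents a morphism $[\iota_P] : P \to S$. I would then show that the restriction-to-$S$ map identifies the limit with the ring of stable elements. On one hand, a compatible family $(y_P)_P \in \lim_{\cO(\cF^c)} H^*(BP)$ satisfies $\res^S_P(y_S) = (B\iota_P)^*(y_S) = y_P$ for every $P$, so the family is recovered from its $S$-component; taking $x := y_S \in H^*(BS) = H^*(S)$ therefore gives an element whose restriction family $(\res^S_P x)_P = (y_P)_P$ lies in the limit, i.e.\ $x \in H^*(\cF)$ in the sense of Definition \ref{d:homfstable}. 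Conversely each $x \in H^*(\cF)$ produces the compatible family $(\res^S_P x)_P$, and these two assignments are mutually inverse. Hence $H^*(\cF) \cong \lim_{\substack{\longleftarrow \\ \cO(\cF^c)}} H^*(BP)$, and composing with the citation yields $H^*(\cF) \cong H^*(|\cL|^\wedge_p)$.

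I do not expect a genuine obstacle: all the real work is carried by \cite[Theorem 4.2]{CCM20}. The only point needing a moment's care is verifying that the stability condition built into Definition \ref{d:homfstable} (landing in the limit over the whole orbit category $\cO(\cF^c)$) is captured by the $S$-component alone. This reduces to the observation that any morphism $[\varphi]:P\to Q$ post-composed with the inclusion $Q\hookrightarrow S$ is again a morphism $P\to S$ in $\cF$, so that compatibility of $(\res^S_P x)_P$ across all of $\cO(\cF^c)$ is equivalent to the single family of identities $\psi^*(x) = \res^S_P(x)$ for $\psi \in \Hom_\cF(P,S)$, which is precisely $\cF$-stability.
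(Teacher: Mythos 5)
Your proposal matches the paper, which states this result without proof as an immediate consequence of \cite[Theorem 4.2]{CCM20}; your additional verification that the $S$-component map identifies $\lim_{\cO(\cF^c)} H^*(BP;\FF_p)$ with the ring of $\cF$-stable elements of Definition \ref{d:homfstable} is correct and simply makes explicit what the paper leaves implicit. No issues.
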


In \cite[Theorem 5.3]{CCM20}, for $m > 0$, the authors
prove the following:

\begin{thm}\label{t:castetal}
There is a natural map $$\psi_m: [|\cL|^\wedge_p, BU(m)^\wedge_p]
\rightarrow \Rep_m(\cF)$$ satisfies: \begin{itemize}
    \item[(1)] for each $\rho \in \Rep_m(\cF)$ and sufficiently
      large $M>0$, $\rho \oplus M\reg \in \im(\psi_{m+M|S|})$;
    \item[(2)] if $f_1,f_2 \in [|\cL|^\wedge_p, BU(m)^\wedge_p] $ are
      such that $\psi_m(f_1)=\psi_m(f_2)$ then $f_1 \oplus h \simeq
      f_2 \oplus h$ for some $h \in [|\cL|^\wedge_p, BU(n)^\wedge_p]$
      with $\psi_n(h)=N\reg$ (some $N \ge 0$).
\end{itemize}
\end{thm}
Here $\reg$ denotes the regular representation of $S$ and $\oplus$ is
the Whitney sum.  Note that we have strengthened the \emph{statement}
of (1) above compared to that given in~\cite[Theorem 5.3]{CCM20};
there it is claimed only that there exists some $M>0$, but the
argument given proves the stronger claim which we will require.

\subsection{Chern classes of $\cF$-stable representations}\label{ss:coh2}
Write $$\FF_p[c_1,c_2,\ldots,c_n]=H^*(BU(n);\FF_p) \cong
\FF_p[x_1,\ldots,x_n]^{\Sigma_n},$$ where $c_i$ has degree $2i$ and
the isomorphism is given by sending $c_i$ to the $i$th symmetric
polynomial. For any finite group $P$, a unitary representation $\rho:
P \rightarrow U(n)$ induces a map $\hat{\rho}: BP \rightarrow BU(n)$
whose homotopy class depends on the conjugacy class of
$\rho$. We thus obtain a map $$\rho^*: H^*(BU(n); \FF_p)
\rightarrow H^*(BP;\FF_p) = H^*(P;\FF_p)$$ and so define the $i$th
\textit{Chern class} of $\rho$ to be $c_i(\rho):=\rho^*(c_i) \in
H^{2i}(P;\FF_p)$.

In particular, if $\rho \in \Rep_n(\cF) \subseteq \Rep_n(S)$ then for
each $1 \le i \le n$, we have $c_i(\rho) \in H^{2i}(S; \FF_p)$. In
fact, we have:
\begin{prop}\label{p:cohcomp}
For $\cF$ and $\rho$ as above, $c_i(\rho) \in H^{2i}(\cF; \FF_p)$ for
each $1 \le i \le n$.
\end{prop}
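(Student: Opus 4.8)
The plan is to exhibit $c_i(\rho)$ as an element of the inverse limit defining $H^*(\cF;\FF_p)$, using the naturality of Chern classes together with the factorisation of $\rho$ supplied by Proposition \ref{p:repfact}. By Definition \ref{d:homfstable}, an element $x \in H^*(S;\FF_p)$ lies in $H^*(\cF;\FF_p)$ precisely when the family of its restrictions $(\res^S_P x)_{P \in \cF^c}$ is compatible with the morphisms of the centric orbit category $\cO(\cF^c)$; concretely, for every $\varphi \in \Hom_\cF(P,Q)$ with $P,Q \in \cF^c$ one needs $\varphi^*(\res^S_Q x) = \res^S_P x$ in $H^*(P;\FF_p)$. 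I would verify this criterion for $x = c_i(\rho)$.

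First I would record the two functorial properties of Chern classes that drive the argument. For any homomorphism $f:P\rightarrow Q$ of finite groups and any $\sigma \in \Rep_n(Q)$, naturality of the classifying map gives $f^*(c_i(\sigma)) = c_i(\sigma \circ f)$; in particular $\res^S_P(c_i(\rho)) = c_i(\rho|_P)$. Moreover $c_i(\sigma)$ depends only on the isomorphism class of $\sigma$, since $\widehat{\sigma}:BQ\rightarrow BU(n)$ is well-defined up to homotopy on $\Rep_n(Q)$. Together these say that $P \mapsto \big(c_i:\Rep_n(P)\rightarrow H^{2i}(P;\FF_p)\big)$ is a natural transformation of contravariant functors, and that it descends to $\cO(\cF^c)$: a morphism there is an $\Inn(Q)$-orbit of maps, and replacing $\varphi$ by $c_q\circ\varphi$ alters neither $\varphi^*$ (inner automorphisms act trivially on mod-$p$ cohomology) nor the isomorphism class of $\sigma\circ(c_q\circ\varphi)$.

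With this in hand the conclusion is immediate. By Proposition \ref{p:repfact} the representation $\rho$ corresponds to the compatible family $(\rho|_P)_{P\in\cF^c} \in \lim_{\substack{\longleftarrow \\ \cO(\cF^c)}} \Rep_n(P)$, meaning $\rho|_Q \circ \varphi \cong \rho|_P$ for every morphism $[\varphi]:P\rightarrow Q$. Applying the natural transformation $c_i$ term by term and using the two properties above yields
$$\varphi^*(c_i(\rho|_Q)) = c_i(\rho|_Q \circ \varphi) = c_i(\rho|_P),$$
so $(c_i(\rho|_P))_{P\in\cF^c}$ is a compatible family, i.e. an element of $\lim_{\substack{\longleftarrow \\ \cO(\cF^c)}} H^{2i}(-;\FF_p)$. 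Since $S \in \cF^c$ and $\res^S_P(c_i(\rho)) = c_i(\rho|_P)$, this is exactly the family of restrictions of $c_i(\rho)$; hence $c_i(\rho)$ satisfies the stability criterion and lies in $H^{2i}(\cF;\FF_p)$.

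The computation is short, and the step I would treat most carefully is the bookkeeping that makes $c_i$ a genuine natural transformation on the \emph{orbit} category rather than on $\cF$ itself: one must check invariance under the $\Inn(Q)$-quotient, which relies precisely on the triviality of the action of inner automorphisms on mod-$p$ cohomology and on the fact that Chern classes are isomorphism invariants. Everything else is formal, once Proposition \ref{p:repfact} is invoked to produce the compatible family indexed by the very category $\cO(\cF^c)$ that defines $H^*(\cF;\FF_p)$.
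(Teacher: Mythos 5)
Your proof is correct and follows essentially the same route as the paper's: both invoke Proposition \ref{p:repfact} to realise $\rho$ as a compatible family over $\cO(\cF^c)$ and then use naturality of Chern classes to conclude via Definition \ref{d:homfstable}. Your version simply spells out the details (the descent of $c_i$ to the orbit category via triviality of inner automorphisms on cohomology) that the paper compresses into its pair of commutative diagrams.
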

\begin{proof}
By Proposition \ref{p:repfact} we can regard
$\rho$  as a tuple  
 $$(\rho_P)_P \in \lim_{\substack{\longleftarrow \\ \cO(\cF^c)}}
\Rep_n(P)$$ given by an $\cO(\cF^c)$-compatible family of representations. For each $\cF$-centric subgroup $P$, restriction induces
commutative diagrams $$\begin{tikzcd} H^*(BU(n);\FF_p) \arrow[rd,
    "\rho^*"'] \arrow[r, "\rho|_P"] & H^*(P;\FF_p) \\ & H^*(S;\FF_p)
  \arrow[u]
\end{tikzcd} \mbox{ and } \begin{tikzcd}
H^*(BU(n);\FF_p) \arrow[rd, "\rho^*"'] \arrow[r, "(\rho|_P)"] &
\mbox{$\displaystyle\lim_{\substack{\longleftarrow \\ \cO(\cF^c)}}$}
H^*(BP;\FF_p) \\ & H^*(\cF;\FF_p) \arrow[u, "\cong"]
\end{tikzcd} $$
and thus using Definition \ref{d:homfstable}, we have
$c_i(\rho)=(c_i(\rho|_P))_{P} \in H^{2i}(\cF;\FF_p)$.
\end{proof} 
Thus for $\rho \in \Rep_n(\cF)$, it makes sense to define the
\textit{$i$th Chern class} of $\rho$ to be $c_i(\rho) \in H^*(\cF;
\FF_p)$. We may further define
$c_\bullet(\rho)=1+c_1(\rho)+\cdots+c_n(\rho)$ to be the \textit{total Chern
  class} of $\rho$. This definition is extended to virtual
representations by setting $c_i(-\rho)=\rho^*(c_i')$ where
$c_\bullet'=1+c_1'+c_2'+ \cdots$ is the unique power series in
$\FF_p[[c_1,\ldots, c_n]]$ satisfying $c_\bullet'c_\bullet=1$.  In particular, for
each $i$ it follows that $c_i(-\rho)$ is expressible as a polynomial
in the Chern classes $c_j(\rho)$ for $j\leq i$.

\begin{defn}\label{d:chernsub}
The \textit{Chern subring} $\Ch(\cF)$ of $H^*(\cF;\FF_p)$ is the
subring generated by the $c_i(\rho)$ for all $i$ and virtual
representations $\rho$, or equivalently for all representations
$\rho$.
\end{defn} 

There is an alternative definition of the Chern subring using the
classifying space for the linking system, which we will temporarily
denote by $\Ch'(\cF)$.  The mod-$p$ cohomology of $BU(n)^\wedge_p$ is
of course a polynomial ring $\FF_p[c_1,\ldots,c_n]$, and $\Ch'(\cF)$
is defined to be the subring of $H^*(|\cL|^\wedge_p;\FF_p)$ generated
by the images in cohomology of all maps $f:|\cL|^\wedge_p\rightarrow
BU(n)^\wedge_p$ for all $n\geq 1$.  Similarly, for a finite group $G$
define $\Ch'(G)$ to be the subring of $H^*(G;\FF_p)$ generated by
the images in cohomology of all maps $f:BG\rightarrow BU(n)$.

\begin{prop} \label{prop:chequal}
We have $\Ch'(G)=\Ch(G)$ and $\Ch'(\cF)=\Ch(\cF)$.  
\end{prop}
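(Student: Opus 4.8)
The plan is to show the two subrings $\Ch(\cF)$ and $\Ch'(\cF)$ coincide by establishing containment in both directions, using the comparison results of Theorem~\ref{t:cohomology} and the properties of the map $\psi_m$ recorded in Section~\ref{ss:coh1}. The key point is that the isomorphism $H^*(\cF;\FF_p)\cong H^*(|\cL|^\wedge_p;\FF_p)$ identifies the two ambient rings, so the real content is to match the two sets of generators. Under this identification, a map $f:|\cL|^\wedge_p\rightarrow BU(n)^\wedge_p$ produces an element $\psi_n(f)\in\Rep_n(\cF)$, and I expect that the induced map $f^*$ on cohomology agrees with the pullback $\rho^*$ of the representation $\rho=\psi_n(f)$ along the generators $c_i$. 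If so, every generator of $\Ch'(\cF)$ is a generator of $\Ch(\cF)$, giving $\Ch'(\cF)\subseteq\Ch(\cF)$.

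First I would make precise the statement that $f^*(c_i)=c_i(\psi_n(f))$ for any $f:|\cL|^\wedge_p\rightarrow BU(n)^\wedge_p$. The map $\psi_n$ sends a homotopy class $f$ to the $\cF$-stable representation obtained by restricting along $S\rightarrow|\cL|^\wedge_p$ (the inclusion of the Sylow subgroup via $BS^\wedge_p$), and naturality of Chern classes under the commutative square relating $BS^\wedge_p\rightarrow|\cL|^\wedge_p\rightarrow BU(n)^\wedge_p$ with the representation $\rho$ should yield this identity directly. This handles the inclusion $\Ch'(\cF)\subseteq\Ch(\cF)$, since the image of $f^*$ is then generated by the $c_i(\psi_n(f))$, which lie in $\Ch(\cF)$.

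For the reverse inclusion $\Ch(\cF)\subseteq\Ch'(\cF)$, I would start from an arbitrary $\rho\in\Rep_n(\cF)$ and seek a map $f:|\cL|^\wedge_p\rightarrow BU(m)^\wedge_p$ realising $\rho$, up to the controlled indeterminacy of $\psi_m$. Here property~(1) of $\psi_m$ is essential: although $\rho$ itself need not be in the image of any $\psi_m$, the stabilised representation $\rho\oplus M\reg$ does lie in $\im(\psi_{m+M|S|})$ for large $M$. The Whitney sum formula gives $c_.(\rho\oplus M\reg)=c_.(\rho)\cdot c_.(M\reg)$, and since $M\reg$ is itself realised by a genuine map (the regular representation is $\cF$-stable and visibly comes from a map of classifying spaces), the total Chern class $c_.(M\reg)$ lies in $\Ch'(\cF)$ and is invertible as a power series in the generators. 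Hence $c_.(\rho)$, and therefore each $c_i(\rho)$, can be recovered inside $\Ch'(\cF)$ from $c_.(\rho\oplus M\reg)$ and the inverse total Chern class of $M\reg$.

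The main obstacle will be the realisation step in the reverse inclusion: reconciling the fact that $\psi_m$ is neither injective nor surjective with the need to identify Chern classes unambiguously. Property~(2) of $\psi_m$ says two maps with the same image under $\psi_m$ agree only after adding a further summand $h$ with $\psi_n(h)=N\reg$; I must check that this ambiguity does not affect Chern classes, which again follows from the Whitney sum formula together with the fact that $c_.(N\reg)$ is a unit. Assembling these observations — the naturality identity $f^*(c_i)=c_i(\psi_n(f))$, the stabilisation property~(1), and the invertibility of $c_.(\reg)$ in the relevant completed polynomial ring — then yields both containments and hence the claimed equality $\Ch'(\cF)=\Ch(\cF)$. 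I would take care that all Chern-class manipulations take place inside the genuinely commutative ring $H^*(\cF;\FF_p)$ (in even degrees), so that the power-series inversions are legitimate.
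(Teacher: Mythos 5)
Your overall architecture matches the paper's: the easy containment $\Ch'(\cF)\subseteq\Ch(\cF)$ via the identity $f^*(c_i)=c_i(\psi_n(f))$, and the reverse containment by stabilising an arbitrary $\theta\in\Rep_n(\cF)$ with multiples of the regular representation so that property~(1) of $\psi_m$ from Section~\ref{ss:coh1} applies. (Your worry about property~(2) is a non-issue: the identification $H^*(|\cL|^\wedge_p;\FF_p)\cong H^*(\cF;\FF_p)$ is via restriction into $H^*(S;\FF_p)$, which is injective, so $f^*(c_i)$ depends only on $\psi_m(f)$ and no correction for the indeterminacy is needed.) There is, however, one genuine gap: you assert that $M\reg$ is ``itself realised by a genuine map'' because the regular representation ``visibly comes from a map of classifying spaces''. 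That is exactly what is not known. An $\cF$-stable representation gives a map $BS\rightarrow BU(n)^\wedge_p$ and a compatible family over the orbit category, but not a map out of $|\cL|^\wedge_p$; whether even $\reg$ itself is realised by some $h:|\cL|^\wedge_p\rightarrow BU(|S|)^\wedge_p$ is essentially the unitary embedding problem for $p$-local finite groups, open for exotic $\cF$ (compare the discussion of \cite{CC17} in the introduction), and the failure of $\psi_m$ to be surjective is precisely the obstacle Proposition~\ref{prop:chequal} must circumvent. Without knowing $c_.(M\reg)\in\Ch'(\cF)$, your division step $c_.(\rho)=c_.(\rho\oplus M\reg)\cdot c_.(M\reg)^{-1}$ only places $c_i(\rho)$ in the subring generated by $\Ch'(\cF)$ together with the classes $c_j(M\reg)$, which is not good enough.

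The gap is repairable with tools already at hand: apply the strengthened property~(1) to $\rho=\reg$ itself, so that $(M+1)\reg\in\im(\psi_{(M+1)|S|})$ for every sufficiently large $M$, and then choose a single $M$ large enough that both $\theta\oplus M\reg$ and $M\reg$ are realised; after that your power-series inversion (legitimate, as you note, in the even-degree commutative part, with each $c_l(-M\reg)$ a polynomial in the $c_j(M\reg)$) completes the argument. The paper instead sidesteps the issue entirely with a slicker choice: it stabilises by $p^k\reg$ with $p^k>n$ and uses $c_.(p\rho)=(c_.(\rho))^p$ mod $p$ to see inductively that $c_i(p^k\reg)=0$ for $0<i<p^k$; the Whitney formula then gives $c_i(\theta\oplus p^k\reg)=c_i(\theta)$ on the nose for all $i\leq n$, so no Chern class of the stabilising summand need be realised or inverted at all. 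Either route works once the realisation of $M\reg$ is handled honestly; as written, your proof has a hole at that step.
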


\begin{proof}
  Each $n$-dimensional representation $\rho$ gives rise to a map
  $B\rho:BG\rightarrow BU(n)$, and so $\Ch(G)\subseteq \Ch'(G)$.
  In general, not every map $f:BG\rightarrow BU(n)$ arises in this
  way (see~\cite[Example 1.18]{A78}), but this is true stably, in
  the sense that there is a virtual representation $\rho'$ whose
  Chern classes coincide with those of $f$, as shown
  in~\cite[Theorem~1.10]{A78}.  It follows that $\Ch(G)=\Ch'(G)$.  

  By the statement for groups we see that $\Ch(S)=\Ch'(S)$, from which
  it follows that $\Ch'(\cF)\subseteq \Ch(\cF)$.  It remains to establish the
  opposite inclusion.  The direct analogue of~\cite[Theorem~1.10]{A78} for
  fusion systems is not known.  
  Instead, we show that given any
  $\theta\in \Rep_n(\cF)$, there exists $N\geq n$ and
  $f\in[|\cL|^\wedge_p,BU(N)^\wedge_p]$ so that for each $i\leq n$,
  $c_i(\theta)=c_i(f^*)$.

  For any $n$, note that $c_\bullet(p\theta)=(c_\bullet(\theta))^p$, and
  so inductively one sees that $c_i(p^k\theta)$ can only be non-zero
  when $p^k$ divides $i$.  Now, let $\rho$ denote the regular
  representation of $S$, and pick $p^k>n$ sufficiently large so that
  $\theta\oplus p^k\rho$ is realized by a map $|\cL|^\wedge_p\rightarrow
  BU(N)^\wedge_p$, where $N=n+p^k|S|$ by Theorem \ref{t:castetal}.  Each
  Chern class of $\theta\oplus p^k\rho$ is contained in $\Ch'(\cF)$, and
  for $i\leq n$, $c_i(\theta\oplus p^k\rho)= c_i(\theta)$.
\end{proof}

Note that $\Ch(\cF)$ is finitely generated by \cite[Proposition 2.1]{GL98}.

\section{Varieties and Quillen stratification}

\subsection{The Green-Leary category of elementary abelian subgroups}
Following \cite[Section 6]{GL98}, to a subring $R$ of the cohomology
ring of a finite group can be associated a certain diagram $\cC(R)$ of
elementary abelian subgroups, and this is used to recover the maximal ideal
spectrum of $R$ under mild conditions.

\begin{defn}
Let $S$ be a finite group and $R$ be a subring of $H^*(S; \FF_p)$. Let
$\cC=\cC(R)$ be the category whose objects are the elementary abelian
subgroups of $S$, and where $f \in \Hom_\cC(E_1,E_2)$ if and only if the
corresponding diagram

\begin{equation}\label{e:comm}
\begin{tikzcd}
R  \arrow[d, "\res" ]
& R \arrow[d, "\res" ] \arrow[l, "\Id"] \\
h^*(E_1;\FF_p) 
&  h^*(E_2;\FF_p) \arrow[l, "f^*"]
\end{tikzcd}
\end{equation}
commutes.
\end{defn}

As in \cite[Section 6]{GL98}, we also define:

\begin{defn}\label{d:largenat}
Let $S$ be a finite group and $R$ be a subring of $H^*(S; \FF_p)$. 
\begin{itemize}
\item[(1)] $R$ is \textit{large} if it contains the Chern classes of
  a positive multiple of the regular representation of $S$;
\item[(2)] $R$ is \textit{natural} if it is generated by homogeneous
  elements and closed under the action of the Steenrod algebra.
\end{itemize}
\end{defn}

We remark that the definition of \emph{large} given above is a
simplification of the one used in~\cite{GL98}.  In the case when $S$
is not finite (e.g., $S$ a compact Lie group or a $p$-toral group), a
large subring is one that contains the Chern classes of a virtual
representation of non-zero degree whose restriction to every
elementary abelian $p$-subgroup of~$S$ is regular.

In \cite[Theorem 6.1]{GL98} the authors prove:
\begin{thm}\label{t:glmain}
Let $S$ be a finite group and $R$ be a subring of $H^*(S; \FF_p)$. If
$R$ is large and natural then the map
$$\colim_{\substack{\cC(R)}} X_E(k) \rightarrow V_R(k)$$  is a homeomorphism.
\end{thm}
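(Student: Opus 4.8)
The plan is to prove Theorem \ref{t:glmain} by comparison with Quillen's stratification of $X_S(k)$ \cite{Q71}. Write $\cA(S)$ for the Quillen category of elementary abelian $p$-subgroups of $S$, whose morphisms are generated by inclusions and conjugations by elements of $S$, so that Quillen's theorem furnishes a homeomorphism $\colim_{\cA(S)} X_E(k) \xrightarrow{\cong} X_S(k)$. Every morphism of $\cA(S)$ comes from an honest group homomorphism in $S$ and hence induces a commuting square on all of $H^*(S;\FF_p)$, a fortiori on $R$; thus there is a faithful functor $\cA(S) \to \cC(R)$ which is the identity on objects. Since $\cC(R)$ has the same objects but possibly more morphisms, the induced map exhibits $\colim_{\cC(R)} X_E(k)$ as a quotient of $X_S(k)$. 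The inclusion $R \hookrightarrow H^*(S;\FF_p)$ dualizes to a map $q\colon X_S(k)\to V_R(k)$, and each restriction map $X_E(k)\to V_R(k)$ factors through $q$ via the inclusion $E\hookrightarrow S$; consequently the map $\Theta\colon \colim_{\cC(R)} X_E(k) \to V_R(k)$ under study sits in a commuting triangle with $q$ and the quotient map. It therefore suffices to prove that $q$ and the quotient $X_S(k)\to \colim_{\cC(R)} X_E(k)$ induce the \emph{same} partition of $X_S(k)$, and that both are quotient maps.

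First I would use largeness to control $q$. Because $R$ contains the Chern classes of the regular representation, which is faithful, finite generation of cohomology shows that $H^*(S;\FF_p)$, and hence $h^*(S)$, is a finitely generated module over $R$; consequently $q$ is a finite, and in particular closed and surjective, morphism of varieties. Surjectivity of $q$, with the triangle, yields surjectivity of $\Theta$ and shows that $V_R(k)$ carries the quotient topology induced by $q$, while the colimit carries the quotient topology by definition. It then remains to compare the two set-theoretic relations on $X_S(k)$: the relation $\sim_q$ identifying points with equal image under $q$, and the relation $\sim_{\cC}$ generated by the morphisms of $\cC(R)$. The inclusion $\sim_{\cC}\subseteq\sim_q$ is immediate, since a $\cC(R)$-morphism $f$ satisfies $\res_{E_1}=f^*\circ\res_{E_2}$ on $R$ and so preserves the image in $V_R(k)$.

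The crux, and the step I expect to be the main obstacle, is the reverse inclusion $\sim_q\subseteq\sim_{\cC}$: two points of $X_S(k)$ with the same image in $V_R(k)$ must be joined by a zigzag of $\cC(R)$-morphisms. Using Quillen's theorem I would represent each point by a point of some $X_E(k)$ that is generic (not lying in the image of any proper subgroup), with $E$ minimal, and reduce, via inclusions (which are $\cC(R)$-morphisms), to matching these generic representatives. Here both hypotheses are needed together. Largeness enters through the restricted Chern classes of the regular representation: restricted to an elementary abelian $E$ of rank $n$ they are the elementary symmetric functions in the nonzero linear forms of $h^*(E)$, which generate, up to integral closure, the Dickson algebra of $\GL_n(\FF_p)$-invariants; hence the image $q(X_E(k))$ detects both the rank $n$ and the $\GL_n(\FF_p)$-orbit. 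Naturality enters to pin the fibers down exactly: the image subring $R_E:=\res_E(R)\subseteq h^*(E)$ is closed under the Steenrod algebra and $h^*(E)$ is finite over it, so the theory of Steenrod-closed subalgebras of a polynomial ring (Adams--Wilkerson) identifies $\Spec R_E$, up to the purely inseparable isogeny that does not alter the associated variety, with the orbit space $X_E(k)/\Aut_{\cC(R)}(E)$, where $\Aut_{\cC(R)}(E)\le\GL(E)$ is exactly the automorphism group of $E$ in $\cC(R)$. Thus the fibers of $q$ over the $E$-stratum are precisely $\Aut_{\cC(R)}(E)$-orbits, and an equality of images across two strata $E,E'$ produces an isomorphism $E\to E'$ in $\cC(R)$ carrying one generic point to the other. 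This invariant-theoretic identification, and the compatible matching of strata across different $E$, is where the real work lies.

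Granting that $\sim_q$ and $\sim_{\cC}$ coincide, both $V_R(k)$ and $\colim_{\cC(R)} X_E(k)$ are the quotient of $X_S(k)$ by one and the same partition, each with its quotient topology, so the canonical comparison $\Theta$ is a homeomorphism; equivalently, $\Theta$ is a continuous closed bijection because $q$ is closed. Throughout, largeness supplies finiteness, surjectivity, and the detection of rank and orbit, while naturality supplies the Quillen-type F-isomorphism detection \cite{Q71} that rules out any extra collapsing beyond $\cC(R)$; this is the structure underlying the proof in \cite{GL98}.
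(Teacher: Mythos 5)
First, a point of comparison: the paper does not prove Theorem \ref{t:glmain} at all --- it is quoted verbatim from \cite[Theorem 6.1]{GL98}, so the ``paper's own proof'' is a citation. Measured against the actual argument in \cite{GL98}, your outline reconstructs the strategy correctly at the structural level: use Quillen's stratification \cite{Q71} together with the finiteness of $h^*(S)$ over $R$ (Venkov's argument, using largeness) to exhibit both $V_R(k)$ and $\colim_{\cC(R)}X_E(k)$ as quotients of $X_S(k)$, observe that the relation generated by $\cC(R)$ refines the relation ``equal image under $q$'', and then use Steenrod-algebra invariant theory to show the two relations coincide. That is indeed the shape of the Green--Leary proof.

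As a proof, however, the proposal has a genuine gap, which you yourself flag: the whole content of the theorem is concentrated in the step you label ``where the real work lies'', and that step is asserted rather than carried out (``Granting that $\sim_q$ and $\sim_{\cC}$ coincide\ldots''). Two concrete points. (i) You pass from $R_E=\res_E(R)$ to its integral closure inside $h^*(E)$ and claim this ``does not alter the associated variety''. For a general integral extension $A\subseteq\overline{A}$ the map $V_{\overline{A}}(k)\to V_A(k)$ is surjective but need not be injective (normalisation of a nodal curve), so this reduction is precisely where Steenrod closure must do work: one needs that every element of $h^*(E)^{W_E}$, with $W_E=\Aut_{\cC(R)}(E)$, has some $p^k$-th power lying in $R_E$ (an $F$-isomorphism in Quillen's sense). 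That is a theorem requiring proof, not a formality. (ii) Even granting the Adams--Wilkerson identification, it only computes the fibres of $q$ over a single stratum $X_E^+(k)$; for two generic points $x\in X_{E_1}^+(k)$ and $y\in X_{E_2}^+(k)$ with $E_1\neq E_2$ and $q(x)=q(y)$ one must manufacture an isomorphism $E_1\to E_2$ lying in $\cC(R)$, and the statement about Steenrod-closed subalgebras of a single polynomial ring does not supply it --- one has to analyse the image of $R$ in $h^*(E_1)\times h^*(E_2)$, or argue as in \cite[Section 6]{GL98}. (A minor further imprecision: the restriction of $\reg_S$ to $E$ is $|S{:}E|$ copies of $\reg_E$, so its Chern classes are $p$-th powers of the elementary symmetric functions when $p$ divides the index; this is harmless up to $F$-isomorphism but should be said.) In short, the proposal is a faithful road map to the proof in \cite{GL98}, but not a proof.
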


Note that if $R=H^*(S; \FF_p)$ then Theorem \ref{t:glmain} is due to
Quillen \cite{Q71}.  One tool for describing $\cC(R)$ is \cite[Lemma 9.2] {GL98}, which we restate here for convenience:

\begin{lem}\label{l:glcid}
Let $S$ be a finite group, let $A$ be an additive subgroup of $R(S)$
containing the regular representation and let $R=R_A$ be the subring
of $H^*(S)$ generated by Chern classes of elements of $A$. Then $R$ is
large and natural. Furthermore $f: E_1 \rightarrow E_2$ is a morphism
in $\cC(R)$ if and only if $\chi(e) = \chi(f(e))$ for all $e \in E_1$ and all characters
$\chi$ of elements of $A$.
\end{lem}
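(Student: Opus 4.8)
The plan is to establish Lemma~\ref{l:glcid} in two halves. First I would verify that $R=R_A$ is large and natural. Largeness is immediate from the hypothesis that $A$ contains the regular representation $\reg$: since $R$ is generated by Chern classes of elements of $A$, it contains $c_i(\reg)$ for all $i$, which is exactly the definition of large in Definition~\ref{d:largenat}(1). For naturality, $R$ is generated by the homogeneous elements $c_i(\rho)$ (each $c_i(\rho)$ lives in $H^{2i}$), so it remains to check closure under the Steenrod algebra. The key point is the Wu formula, which expresses $\Sq^k c_i$ (or the odd-primary Steenrod operations on Chern classes) as a polynomial in the $c_j$; since these operations are natural with respect to the maps $\rho^*$, applying a Steenrod operation to $c_i(\rho)=\rho^*(c_i)$ yields $\rho^*$ of a polynomial in the $c_j$, hence again an element of $R$. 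This is essentially a restatement of the classical fact underlying the definition, so I expect this half to be quick.

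The substantive half is the characterization of morphisms in $\cC(R)$. I would fix elementary abelian subgroups $E_1,E_2\le S$ and an injective homomorphism $f:E_1\to E_2$, and analyze when diagram~\eqref{e:comm} commutes, i.e.\ when $\res^S_{E_1}=f^*\circ\res^S_{E_2}$ on $R$. Because $R$ is generated by the Chern classes $c_i(\rho)$ for $\rho$ running over (a generating set of) $A$, and both $\res$ and $f^*$ are ring homomorphisms into the commutative ring $h^*(E_1;\FF_p)$, commutativity on all of $R$ is equivalent to commutativity on these generators: for every $\rho$ whose class lies in $A$ and every $i$, one needs $\res^S_{E_1}c_i(\rho)=f^*\res^S_{E_2}c_i(\rho)$. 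Now $\res^S_{E_1}c_i(\rho)=c_i(\rho|_{E_1})$ and $f^*\res^S_{E_2}c_i(\rho)=c_i(f^*(\rho|_{E_2}))=c_i((\rho|_{E_2})\circ f)$, so the condition becomes $c_i(\rho|_{E_1})=c_i((\rho|_{E_2})\circ f)$ for all $i$ and all $\rho\in A$.

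The heart of the argument is to convert the equality of all Chern classes into an equality of characters, and here the restriction to \emph{elementary abelian} subgroups is exactly what makes this work. For an elementary abelian $p$-group $E$ the mod-$p$ cohomology detects representations through Chern classes up to the information carried by the characters: a representation of $E$ decomposes as a sum of one-dimensional characters, and the total Chern class of a sum of linear characters records (the mod-$p$ reductions of) those characters via the classical computation of $H^*(E;\FF_p)$ and the identification of $c_1$ of a linear character with its image in $H^2(E;\FF_p)$. Concretely, two representations of $E$ have the same Chern classes in $h^*(E;\FF_p)$ if and only if their restrictions to every cyclic (equivalently, every order-$p$ cyclic) subgroup, i.e.\ their character values on every element of $E$, agree; this is the content of Quillen's detection on elementary abelians combined with the fact that linear characters are separated by their $c_1$. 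Applying this to $\rho|_{E_1}$ and $(\rho|_{E_2})\circ f$, the condition $c_i(\rho|_{E_1})=c_i((\rho|_{E_2})\circ f)$ for all $i$ is equivalent to $\chi_\rho(e)=\chi_\rho(f(e))$ for all $e\in E_1$, where $\chi_\rho$ is the character of $\rho$. Ranging over all $\rho$ whose class lies in $A$ gives exactly the stated criterion.

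The main obstacle, and the step requiring the most care, is the last one: making precise the claim that on an elementary abelian $p$-group Chern classes (in the reduced ring $h^*(E;\FF_p)$) detect precisely the character, and no more. I would need to be careful about the distinction between integral character values and their mod-$p$ behaviour, about working in $h^*$ rather than $H^*$ (killing nilpotents discards the Bocksteins/odd-degree part), and about the fact that it is the full collection $\{c_i\}$, not any single one, that recovers the character. The cleanest route is to reduce to cyclic subgroups $\langle e\rangle$ of order $p$, where $c_1$ of a faithful linear character generates the relevant part of cohomology, so that agreement of all $c_i$ after restriction to each such $\langle e\rangle$ is equivalent to agreement of the characters at $e$; this is the same mechanism used for finite groups in~\cite{GL98}, and since the statement here is purely about subgroups of $S$ it transfers verbatim once largeness and naturality are in place.
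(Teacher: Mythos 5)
The paper offers no proof of this lemma --- it is restated verbatim from \cite{GL98} --- and your reconstruction is correct and follows essentially the same route as that source: largeness and naturality are formal, and the morphism criterion reduces to the fact that on an elementary abelian $p$-group the Chern classes in $h^*(E;\FF_p)$ of a representation determine, and are determined by, its character (injectivity of $c_1$ on linear characters together with unique factorisation of the total Chern class in $\FF_p[y_1,\dots,y_n]$). Two small points to make explicit: elements of $A$ may be virtual, so before decomposing into linear characters you should write $\alpha=\rho-\sigma$ with $\rho,\sigma$ genuine and cross-multiply total Chern classes, comparing $c_.\bigl(\rho|_{E_1}\oplus(\sigma|_{E_2}\circ f)\bigr)$ with $c_.\bigl((\rho|_{E_2}\circ f)\oplus\sigma|_{E_1}\bigr)$; and your appeal to cyclic subgroups is only needed in the harmless direction (from equality of Chern classes on $E_1$ to equality of characters at each $e$), since the reverse implication goes directly through the isomorphism of the two representations of $E_1$, rather than through any detection of $h^*(E_1)$ by its cyclic subgroups (which would fail).
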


For example, if $R$ is the subring generated by the Chern class of the
regular representation then $R$ is large and natural, and $\cC(R)$ is
the category of all injective maps between elementary abelian
subgroups by \cite[Lemma 6.2]{GL98}.

\subsection{Quillen stratification for fusion systems}
Now let $S$ is be a finite $p$-group and $\cF$ be a saturated fusion
system on $S$. We first apply Theorem \ref{t:glmain} to reinterpret
Linckelmann's description of the spectrum of the cohomology ring of a
fusion system.  Recall that a subgroup $P \le S$ is said to be
\textit{$\cF$-subconjugate} to $Q \le S$ if some $\cF$-conjugate of
$P$ is contained in $Q$.

\begin{prop}\label{p:linckelmannprop}
Let $\cF$ be a saturated fusion system on $S$ and $E$ be an elementary
abelian subgroup of $S$. Let $\sigma_E$ be a homogeneous element in
$h^*(E;\FF_p)$ satisfying $$\res^E_F(\sigma_E)=0 \mbox{ for each $F <
  E$.}$$ Then,
\begin{itemize}
    \item[(1)] for any $\eta \in h^*(E;\FF_p)^{\Aut_\cF(E)}$, there is
      $\eta ' \in H^*(\cF;\FF_p)$ such that $\res^S_E(\eta ')=(\sigma_E
      \cdot \eta)^{p^a}$; and
    \item[(2)] there exists $\rho_E \in h^*(\cF;\FF_p)$ such that
      $\res^S_E(\rho_E)=(\sigma_E)^{p^a}$ and $\res^S_F(\rho_E)=0$ for all
      subgroups $F$ to which $E$ is not $\cF$-subconjugate.
    \end{itemize}
\end{prop}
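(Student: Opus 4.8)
The plan is to establish both parts by transporting statements about the single elementary abelian subgroup $E$ up to genuinely $\cF$-stable classes, using the fact that $H^*(\cF;\FF_p)$ is the ring of $\cF$-stable elements together with Evens--Venkov finiteness. First I would recall Linckelmann's identification $H^*(\cF;\FF_p)\cong \varprojlim_{\cE(\cF)} H^*(E;\FF_p)$ (equivalently the $\cF$-stable elements of $H^*(S)$), so that to produce an element of $H^*(\cF;\FF_p)$ with prescribed restrictions it suffices to exhibit an $\Aut_\cF(E)$-invariant, $\cF$-compatible collection of restrictions to the elementary abelian subgroups. The role of the hypothesis $\res^E_F(\sigma_E)=0$ for all $F<E$ is that $\sigma_E$ is supported ``at the top'' of $E$, so a class built from $\sigma_E$ will automatically vanish on proper subgroups and hence be controllable on subgroups of $S$ not $\cF$-containing $E$.

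For part~(1), given $\eta\in h^*(E)^{\Aut_\cF(E)}$, I would consider the product $\sigma_E\cdot\eta$, which lies in $h^*(E)^{\Aut_\cF(E)}$ because $\sigma_E$ is itself $\Aut_\cF(E)$-invariant up to the support condition (or can be averaged to be so, raising to a $p$-power to kill denominators in characteristic $p$). The element $\sigma_E\cdot\eta$ is then invariant under $\Aut_\cF(E)$ and vanishes on proper subgroups; the classical Quillen construction (as in \cite{Q71}) shows that some $p^a$-th power $(\sigma_E\cdot\eta)^{p^a}$ extends to a global $\cF$-stable class $\eta'$ with $r_E(\eta')=(\sigma_E\cdot\eta)^{p^a}$. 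Concretely I would invoke the surjectivity of the restriction map from $\cF$-stable cohomology onto the $\Aut_\cF(E)$-invariants up to $F$-isogeny, which is exactly the content of Quillen stratification for $\cF$ (the colimit being over $\cE(\cF)$); raising to a $p$-power is the standard device that makes an invariant element lie in the image of restriction.

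For part~(2), I would apply part~(1) with $\eta=1$ to obtain $\eta'\in H^*(\cF;\FF_p)$ with $r_E(\eta')=(\sigma_E)^{p^a}$; the remaining task is to correct $\eta'$ so that its restriction to subgroups $F$ not $\cF$-containing $E$ vanishes. Here I would use the support/vanishing properties: since $\sigma_E$ restricts to zero on all proper subgroups of $E$, the class $\eta'$ already restricts to zero on any $F\le S$ that does not contain (up to $\cF$-conjugacy) a full copy of $E$. The subtle case is an $F$ that properly $\cF$-contains $E$ or is incomparable; there I would multiply $\eta'$ by a suitable ``localizing'' class, or more precisely subtract off contributions indexed by the other maximal elementary abelians, exactly as in Quillen's original inductive purification argument. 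Iterating over the poset of elementary abelian subgroups of $S$ (ordered by $\cF$-subconjugacy) and taking a further $p$-power at each stage produces $\rho_E$ with the stated vanishing.

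The main obstacle I anticipate is the bookkeeping in part~(2): ensuring that the corrections needed to annihilate the restriction to subgroups $F$ to which $E$ is \emph{not} $\cF$-subconjugate do not disturb the prescribed value $(\sigma_E)^{p^a}$ on $E$ itself, and that the whole construction stays inside the $\cF$-stable subring rather than merely in $H^*(S)$. Controlling this requires simultaneously tracking the $\Aut_\cF(E)$-action and the $\cF$-subconjugacy poset, and the clean way to do so is to work in the colimit $\varinjlim_{\cE(\cF)} X_E(k)$ and translate the vanishing conditions into closed conditions on support varieties; the fact that distinct $p$-powers can be absorbed (since we only need existence of \emph{some} $p^a$) is what ultimately makes the inductive purification terminate.
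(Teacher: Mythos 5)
The paper offers no proof of this proposition: it is quoted verbatim from Linckelmann and the ``proof'' is the citation \cite[Proposition~6]{L17}, so your argument can only be measured against Linckelmann's. As it stands it contains a genuine circularity. In part (1) you propose to ``invoke the surjectivity of the restriction map from $\cF$-stable cohomology onto the $\Aut_\cF(E)$-invariants up to $F$-isogeny, which is exactly the content of Quillen stratification for $\cF$''; but Proposition~\ref{p:linckelmannprop} is precisely the technical input from which that stratification (Theorem~\ref{t:linckisog}, i.e.\ \cite[Theorem~1(i)]{L17}) is deduced, so you may not assume it here. Relatedly, your opening identification $H^*(\cF;\FF_p)\cong\varprojlim_{\cE(\cF)}H^*(E;\FF_p)$ is false: $H^*(\cF)$ is the inverse limit over the centric orbit category of \emph{all} subgroups, equivalently the ring of $\cF$-stable elements of $H^*(S)$, and the comparison map to the limit over elementary abelian subgroups is only an inseparable isogeny --- again, the theorem being proved. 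Hence ``exhibiting an $\cF$-compatible family of restrictions to elementary abelian subgroups'' does not produce an element of $H^*(\cF;\FF_p)$, and your proposal never supplies an actual mechanism for constructing one.

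The missing ingredient is the construction itself. In the finite group case the classes $\eta'$ and $\rho_E$ are manufactured with the Evens norm map and the Mackey formula (this is the content of \cite[Section~5.6]{B91}, which the paper points to for the existence of $\sigma_E$); the new difficulty for a fusion system is to force the resulting class to be $\cF$-stable, which Linckelmann achieves by adapting that construction to the stable-elements setting using the finiteness and transfer results of \cite{BLO03}, not by quoting stratification. Your fallback devices do not repair this: averaging over $\Aut_\cF(E)$ is unavailable in characteristic $p$ since $\Aut_S(E)\leq\Aut_\cF(E)$ is a (possibly nontrivial) $p$-group, and raising to $p$-th powers does not render a non-invariant element invariant. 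Finally, in part (2) the claim that $\eta'$ ``already restricts to zero'' on subgroups $F\leq S$ not $\cF$-containing $E$ does not follow from the hypothesis on $\sigma_E$, which controls only restrictions to subgroups of $E$ itself; the vanishing on unrelated subgroups of $S$ has to be built into the construction (this is exactly what the norm map buys), and the subsequent ``inductive purification'' over the poset of elementary abelians is asserted rather than carried out.
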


\begin{proof}
See \cite[Proposition 6]{L17}.
\end{proof}

Following \cite{L17}, let $X_\cF(k)=V_{h^*(\cF;\FF_p)}(k)$ denote the
maximal ideal spectrum of $H^*(\cF; \FF_p)$ and, for a subgroup $Q \le
S$, set $X_{\cF,Q}(k):=(\res^S_Q)^*(X_Q(k))$ where $\res_Q^S$ is the restriction map $H^*(\cF;\FF_p) \rightarrow H^*(Q;\FF_p)$ . Finally set $$X_Q^+(k):=X_Q(k) \backslash \bigcup_{R
  < Q} (\res_R^Q)^*(X_R(k)), \mbox{ and } X_{\cF,Q}(k)^+ =
r_Q^*(X_Q^+(k)).$$ The existence of an element $\sigma_E$ satisfying
the conditions in Proposition \ref{p:linckelmannprop} is shown in
\cite[Section 5.6]{B91} in the discussion which precedes \cite[Lemma
  5.6.2]{B91} and from this Linckelmann deduces in \cite[Theorem
  1(i)]{L17} that

$$X_\cF(k)=\bigcup_E X_{\cF,E}(k)=\coprod_E X^+_{\cF,E}(k),$$ is a
union of locally closed subvarieties, where $E$ runs through a set of
$\cF$-isomorphism class representatives of elementary abelian
subgroups of $S$. Equivalently, (c.f. \cite[Corollary 5.6.4]{B91}) we
have the following result:

\begin{thm}\label{t:linckisog}
The natural map $$\colim_{\substack{\cE(\cF)}}  X_E(k) \rightarrow X_\cF(k)$$
is an inseparable isogeny. 
\end{thm}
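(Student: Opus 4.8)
The plan is to read this off from Linckelmann's stratification quoted just above (\cite[Theorem 1(i)]{L17}) as its ``isogeny'' reformulation, exactly as Benson derives \cite[Corollary 5.6.4]{B91} from Quillen stratification for groups. As a topological space $\colim_{\cE(\cF)} X_E(k)$ is the quotient of $\coprod_E X_E(k)$ by the relation generated by $x \sim f_*(x)$ over all $f \in \Hom_{\cE(\cF)}(E_1,E_2)$ and $x \in X_{E_1}(k)$, and the natural map is induced by the compatible family of restrictions $r_E^*\colon X_E(k)\to X_\cF(k)$; dually it is the ring map $h^*(\cF)\to \lim_{\cE(\cF)} h^*(E;\FF_p)$. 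An inseparable isogeny is a finite morphism that is a bijection on $k$-points and whose comorphism is a uniform $F$-isomorphism (injective modulo nilpotents, with a $p$-power of every element of the target in the image), so I would verify bijectivity, finiteness, and the $F$-isomorphism property in turn.

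Bijectivity is the geometric heart. Surjectivity is immediate from $X_\cF(k)=\bigcup_E X_{\cF,E}(k)$. For injectivity I use the refinement $X_\cF(k)=\coprod_E X^+_{\cF,E}(k)$ and show that all preimages of a fixed $\xi\in X^+_{\cF,E}(k)$ collapse to one class. Any preimage is represented by some $x\in X_{E'}(k)$ with $r_{E'}^*(x)=\xi$; by the stratification of $X_{E'}(k)$ for the elementary abelian group $E'$, $x$ lies in $(\res^{E'}_F)^*(X_F^+(k))$ for a unique $F\le E'$, so, the inclusion $F\hookrightarrow E'$ being a morphism of $\cE(\cF)$, $x$ is identified in the colimit with some $\tilde x\in X_F^+(k)$ satisfying $r_F^*(\tilde x)=\xi$. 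Then $\xi\in r_F^*(X_F^+(k))=X^+_{\cF,F}(k)$, and disjointness of the $+$ strata forces $F$ to be $\cF$-isomorphic to $E$; applying such an $\cF$-isomorphism (again a morphism of $\cE(\cF)$) carries $\tilde x$ to a point of $X_E^+(k)$ over $\xi$. Finally $r_E^*$ exhibits $X^+_{\cF,E}(k)$ as the orbit space $X_E^+(k)/\Aut_\cF(E)$, so its fibre over $\xi$ is a single $\Aut_\cF(E)$-orbit; as $\Aut_\cF(E)=\Aut_{\cE(\cF)}(E)$, all these representatives are identified, giving one preimage class.

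Finiteness holds because each $X_E(k)\to X_\cF(k)$ is finite: $h^*(E;\FF_p)$ is a finitely generated module over $h^*(\cF;\FF_p)$ (by Evens--Venkov finiteness of $H^*(E)$ over $H^*(S)$ together with finiteness of $H^*(S)$ over the Noetherian ring $H^*(\cF)$), so $\lim_{\cE(\cF)} h^*(E)\subseteq\prod_E h^*(E)$ is integral over $h^*(\cF)$. For the $F$-isomorphism property, injectivity modulo nilpotents of $h^*(\cF)\to\prod_E h^*(E)$ is inherited from Quillen detection applied to $h^*(S)\supseteq h^*(\cF)$, and the kernel is trivial as $h^*(\cF)$ is reduced. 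For surjectivity up to Frobenius I would run the standard Quillen descending induction over the $\cF$-classes of elementary abelian subgroups, for which Proposition \ref{p:linckelmannprop} furnishes precisely the inputs: part (1) lifts a $p^a$-th power of each twisted invariant class $\sigma_E\cdot\eta$ into $H^*(\cF;\FF_p)$, and part (2) produces $\rho_E$ supported on the closure of the $E$-stratum, localising the induction so that treating one stratum leaves the others untouched. Since $\sigma_E$ is non-vanishing along the open stratum $X^+_{\cF,E}(k)$, these twisted lifts realise a $p$-power of every element of the target inverse limit, yielding the uniform $F$-isomorphism.

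I expect the injectivity and fibre analysis to be the main obstacle. The delicate points are that every preimage of a point of $X^+_{\cF,E}(k)$---including those arising from strictly larger elementary abelian subgroups via the inclusion morphisms---must be tracked down to the single distinguished $\Aut_\cF(E)$-orbit, and that the orbit-space identification $X^+_{\cF,E}(k)\cong X_E^+(k)/\Aut_\cF(E)$ is not formal but rests on the two separating families of Proposition \ref{p:linckelmannprop}: $\rho_E$, detecting $\cF$-subconjugacy to $E$, and $\sigma_E$, isolating the $+$ stratum. Keeping distinct strata genuinely separated while identifying each with its orbit space is where Linckelmann's argument must be followed most carefully.
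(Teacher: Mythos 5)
Your proposal is correct and follows exactly the route the paper intends: the paper offers no proof of Theorem~\ref{t:linckisog} beyond asserting that it is the ``isogeny'' reformulation of Linckelmann's stratification $X_\cF(k)=\coprod_E X^+_{\cF,E}(k)$, in the manner of \cite[Corollary 5.6.4]{B91}, which is precisely the deduction you carry out. Your verification of bijectivity via the disjoint $+$-strata and the orbit-space identification $X^+_{\cF,E}(k)\cong X_E^+(k)/\Aut_\cF(E)$, together with finiteness and the uniform $F$-isomorphism property from Proposition~\ref{p:linckelmannprop}, is a faithful (and more detailed) account of that same argument.
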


In particular, we have:

\begin{prop}\label{p:linckreint}
Suppose $R=H^*(\cF;\FF_p) \subseteq H^*(S;\FF_p)$. Then, 
\begin{itemize}
    \item[(1)] $R$ is large and natural; 
    \item[(2)] $\cC(R)$ is exactly $\cE(\cF)$.
\end{itemize}

\end{prop}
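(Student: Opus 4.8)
The plan is to prove (1) first and then use it, together with Theorems~\ref{t:glmain} and~\ref{t:linckisog}, to deduce (2). For largeness, note that the regular representation $\reg$ of $S$ is $\cF$-stable (its character is supported on the identity, whose $\cF$-conjugacy class is a singleton), so $\reg \in \Rep_{|S|}(\cF)$ and Proposition~\ref{p:cohcomp} places its Chern classes in $H^*(\cF) = R$; thus $R$ contains the Chern classes of $\reg$. For naturalness, observe that $\cF$-stability is a grading-preserving condition, so $R$ is a graded subring and hence generated by homogeneous elements; and since every restriction map and every induced map $\varphi^*$ commutes with the Steenrod operations, the defining condition of Definition~\ref{d:homfstable} is preserved by the Steenrod algebra, so $R$ is closed under its action.

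For (2), both categories have the elementary abelian subgroups of $S$ as objects, so only the morphism sets are at issue. The inclusion $\cE(\cF) \subseteq \cC(R)$ is the routine direction: if $f \in \Hom_\cF(E_1,E_2)$, factor $f = \iota \circ \bar f$ with $\bar f \colon E_1 \xrightarrow{\cong} f(E_1)$ and $\iota \colon f(E_1) \hookrightarrow E_2$; applying the $\cF$-stability of any $x \in R$ to the morphism $\bar f$ gives $\res^S_{E_1}(x) = \bar f^{*}\res^S_{f(E_1)}(x) = f^{*}\res^S_{E_2}(x)$, so diagram~\eqref{e:comm} commutes and $f \in \cC(R)$. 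Thus the identity on objects extends to an inclusion functor $\cE(\cF) \hookrightarrow \cC(R)$ compatible with the structure maps $r_E^{*}\colon X_E(k) \to V_R(k)$. Here I use that, because $h^*(E)$ is reduced and $k$ is algebraically closed, the commuting of~\eqref{e:comm} is equivalent to the relation $r_{E_1}^{*} = r_{E_2}^{*} \circ f_*$ between maps of $k$-varieties.

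The harder inclusion $\cC(R) \subseteq \cE(\cF)$ I would obtain by comparing colimits. By part (1) and Theorem~\ref{t:glmain} the family $\{r_E^{*}\}$ induces a homeomorphism $\colim_{\substack{\cC(R)}} X_E(k) \to V_R(k)$, while Theorem~\ref{t:linckisog}, over algebraically closed $k$, gives a homeomorphism $\colim_{\substack{\cE(\cF)}} X_E(k) \to V_R(k)$ induced by the same family. The inclusion functor therefore induces a continuous bijection $\colim_{\substack{\cE(\cF)}} X_E(k) \to \colim_{\substack{\cC(R)}} X_E(k)$ commuting with both homeomorphisms, so it is itself a bijection. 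Consequently, for any morphism $f\colon E_1 \to E_2$ of $\cC(R)$ and any $v \in X_{E_1}(k)$, the points $v$ and $f_*v$, identified in the $\cC(R)$-colimit since $f$ is a $\cC(R)$-morphism, are already identified in the $\cE(\cF)$-colimit; that is, they are joined by a zig-zag of morphisms of $\cE(\cF)$.

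The crux is to upgrade this identification into a single fusion morphism realizing $f$. Identifying $X_E(k)$ with $E \otimes_{\FF_p} k$, a point lies in $X_E^{+}(k)$ exactly when its support, the smallest subgroup $E'' \le E$ with the point in $E'' \otimes k$, is all of $E$, and every fusion morphism, being injective, carries supports isomorphically. Choosing $v$ generic in $X_{E_1}^{+}(k)$, the zig-zag connecting $v$ to $f_*v \in X_{f(E_1)}^{+}(k)$ then consists of isomorphisms on supports and composes to a single fusion isomorphism; in particular $f(E_1)$ is $\cF$-conjugate to $E_1$, and for each such $v$ there is $\varphi \in \Hom_\cF(E_1, f(E_1))$ with $\varphi_* v = f_* v$. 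Since $\Hom_\cF(E_1, f(E_1))$ is finite and each locus $\{\,v : \varphi_* v = f_* v\,\}$ is a linear subspace of the irreducible variety $X_{E_1}(k)$, these subspaces cannot cover the dense subset $X_{E_1}^{+}(k)$ unless one of them is everything; hence $\varphi_* = f_*$ for a single $\varphi$, forcing $\bar f = \varphi \in \cF$ and $f = \iota \circ \bar f \in \cE(\cF)$. I expect this last step to be the main obstacle: extracting one globally valid fusion morphism from a colimit identification, for which the injectivity of fusion morphisms (so that supports are preserved) and the irreducibility of $X_{E_1}(k)$ are the essential inputs.
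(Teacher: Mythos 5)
Your proposal is correct, but for part (2) it takes a genuinely different route from the paper. The paper disposes of (2) in one sentence: it observes that the argument of \cite{L17} proving Theorem \ref{t:linckisog} --- which rests on the detecting classes $\eta'$ and $\rho_E$ of Proposition \ref{p:linckelmannprop} --- already shows that the morphisms for which diagram (\ref{e:comm}) commutes with $R=H^*(\cF;\FF_p)$ are exactly the fusion morphisms, so $\cC(R)=\cE(\cF)$ follows from the definition of $\cC(R)$. You instead use only the \emph{statement} of Linckelmann's result (in the homeomorphism form quoted in the introduction; Theorem \ref{t:linckisog} as stated gives an inseparable isogeny, which is indeed a homeomorphism of varieties, so this is harmless) together with Theorem \ref{t:glmain}, and recover the equality of categories formally: the two homeomorphisms onto $V_R(k)$ force the inclusion-induced map of colimits to be a bijection, so a $\cC(R)$-identification of $v$ with $f_*v$ is realized by an $\cE(\cF)$-zig-zag; your support bookkeeping (fusion morphisms restrict to $\cF$-isomorphisms on supports, and inverses of $\cF$-isomorphisms lie in $\cF$, both fusion system axioms) collapses the zig-zag to a single $\varphi_v\in\Hom_\cF(E_1,E_2)$ with $(\varphi_v)_*v=f_*v$ for each $v\in X_{E_1}^+(k)$; and since $k$ is infinite and $k^{\rk E_1}$ is not a finite union of proper linear subspaces, some $\varphi$ satisfies $\varphi_*=f_*$, whence $\varphi=f$ by injectivity of $\Hom(E_1,E_2)\to\Hom_k(E_1\otimes k,E_2\otimes k)$. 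What your route buys is independence from the internals of \cite{L17}: you treat Linckelmann's theorem as a black box, at the cost of redoing essentially the support analysis his proof contains; the paper's version is shorter but asks the reader to extract the characterization of $\cC(R)$ from \cite{L17} directly. One small point to patch: in the easy inclusion $\cE(\cF)\subseteq\cC(R)$ you apply $\cF$-stability of $x\in H^*(\cF)$ to an arbitrary morphism between elementary abelian subgroups, whereas Definition \ref{d:homfstable} imposes stability only over the centric orbit category; the bridge is the standard fact (via the Alperin--Goldschmidt fusion theorem, as in \cite{BLO03}) that elements stable over $\cO(\cF^c)$ are stable under all morphisms of $\cF$. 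Part (1) of your argument matches the paper's in substance, differing only in citing Proposition \ref{p:cohcomp} where the paper cites Lemma \ref{l:glcid}.
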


\begin{proof}
$R$ is large since the regular representation
  of $S$ is $\cF$-stable, and naturality follows
  because the action of the Steenrod algebra on the cohomology of each
  pair of subgroups $P,Q\leq S$ commutes with the maps induced by any
  homomorphism $\phi:P\rightarrow Q$ in $\cF$.  A consequence of the
  argument in \cite{L17} which proves Theorem \ref{t:linckisog} is
  that the morphisms $\varphi \in \Hom_\cF(E_1,E_2)$ are exactly those
  for which the diagram (\ref{e:comm}) commutes with
  $R=H^*(\cF;\FF_p)$, proving (2). 
\end{proof}

\section{Chern classes of $\cF$-stable representations} We now apply Theorem \ref{t:glmain} to describe the spectra of the subrings of $H^*(\cF)$ determined by various classes of $\cF$-stable representations of $S$. The reader is referred to Section \ref{s:intro} for definitions of the categories $\cE(\cF)$, $\cE'(\cF)$, $\cE_\RR'(\cF)$ and $\cE'_P(\cF)$. We start with the collection of all $\cF$-stable representations.

\begin{prop}\label{p:allreps}
If $R=\Ch(\cF) \subseteq H^*(\cF;\FF_p)$ then $\cC(R)=\cE'(\cF)$.
\end{prop}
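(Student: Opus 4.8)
The plan is to realize $\Ch(\cF)$ as a ring of the form $R_A$ appearing in Lemma \ref{l:glcid}, and then to translate the resulting character condition into the defining condition of $\cE'(\cF)$ by means of Corollary \ref{c:rankconj}. First I would take $A=R(\cF)$, the additive group of $\cF$-stable virtual representations of $S$. This contains the regular representation of $S$ (the regular representation is $\cF$-stable, since its restriction to any subgroup is a sum of copies of the regular representation of that subgroup), so $A$ satisfies the hypotheses of Lemma \ref{l:glcid}. By Definition \ref{d:chernsub} the subring of $H^*(S;\FF_p)$ generated by the Chern classes of elements of $A$ is exactly $\Ch(\cF)=R$, so $R=R_A$. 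Lemma \ref{l:glcid} then yields both that $R$ is large and natural and that $f\in\Hom_{\cC(R)}(E_1,E_2)$ if and only if $\chi(e)=\chi(f(e))$ for every $e\in E_1$ and every character $\chi$ of an element of $A$. The key bookkeeping observation at this stage is that the characters of elements of $A=R(\cF)$ are precisely the elements of $C(\cF)$, the $\cF$-stable virtual characters; thus the morphism condition reads $\chi(e)=\chi(f(e))$ for all $e\in E_1$ and all $\chi\in C(\cF)$.

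With this reformulation, both inclusions are short. For $\cE'(\cF)\subseteq\cC(R)$, suppose $f\in\cE'(\cF)$ and fix $e\in E_1$; by definition there is $\varphi\in\cF$ with $f(e)=\varphi(e)$, and since every $\chi\in C(\cF)$ is constant on $\cF$-conjugacy classes we get $\chi(f(e))=\chi(\varphi(e))=\chi(e)$, whence $f\in\cC(R)$. Conversely, for $\cC(R)\subseteq\cE'(\cF)$, suppose $f\in\cC(R)$ and fix $e\in E_1$; then $\chi(e)=\chi(f(e))$ for all $\chi\in C(\cF)$, and Corollary \ref{c:rankconj} applied to the pair $e,f(e)\in S$ produces a morphism $\varphi\in\cF$ with $f(e)=\varphi(e)$. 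Since this holds for each $e$ separately, $f\in\cE'(\cF)$.

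I expect the genuine content to sit entirely in the two cited results, so the proposition is really a clean combination rather than a computation; the only point requiring care is the order of quantifiers. Corollary \ref{c:rankconj} produces, for each element $e$, its \emph{own} conjugating morphism $\varphi$, which is exactly the ``for all $e$ there exists $\varphi$'' shape of $\cE'(\cF)$, and this explains why one lands in $\cE'(\cF)$ rather than in the smaller $\cE(\cF)$ (which would demand a single $\varphi\in\cF$ conjugating all of $E_1$ simultaneously). The main thing to double-check is therefore the identification of the character set appearing in Lemma \ref{l:glcid} with $C(\cF)$, so that Corollary \ref{c:rankconj}, which quantifies over all of $C(\cF)$, is legitimately applicable.
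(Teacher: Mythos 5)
Your proposal is correct and follows the paper's own argument exactly: take $A=R(\cF)$, apply Lemma \ref{l:glcid} to get largeness, naturality and the character criterion for morphisms, and then convert that criterion into the defining condition of $\cE'(\cF)$ via $\cF$-stability in one direction and Corollary \ref{c:rankconj} in the other. The paper compresses the two inclusions into the single phrase ``by Corollary \ref{c:rankconj}''; you have merely written out what it leaves implicit, including the correct quantifier order that lands you in $\cE'(\cF)$ rather than $\cE(\cF)$.
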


\begin{proof}
Since $A=R(\cF) \subseteq R(S)$ is an additive subgroup of the
representation ring of $S$ generated by genuine representations, and
containing the regular representation, and since $\Ch(\cF)$ is exactly
the subring of $H^*(\cF;\FF_p)$ generated by Chern classes of elements
of $A$, we have by Lemma \ref{l:glcid} that: \begin{itemize}
\item[(1)] $R$ is large and natural;
\item[(2)] $f \in \Mor_{\cC(R)}(E_1,E_2)$ if and only if  $\chi(e)=\chi(f(e))$ for all $e
  \in E_1$ and all characters $\chi$ of elements of $A$.
   
\end{itemize}
Hence $\cC(R)=\cE'(\cF)$ by Corollary \ref{c:rankconj} and
so $$\colim_{\substack{\cE'(\cF)}} X_E(k) \rightarrow
V_{\Ch(\cF)}(k)$$ is a homeomorphism by Theorem \ref{t:glmain}.
\end{proof}

\subsection{Real representations}  The \textit{real Chern subring}
$\Ch_\RR(\cF)$ is defined to be the subring of $H^*(S)$ generated by
Chern classes of real $\cF$-stable representations of $S$.  There are
other possible definitions for $\Ch_\RR(\cF)$, such as the (possibly
larger) ring containing the Chern classes of all complex
representations whose characters are real, which we shall temporarily
denote by $\Ch''_\RR(\cF)$, and the (possibly smaller) ring generated
by the images of those maps $f:|\cL|^\wedge_p\rightarrow BU(n)^\wedge_p$ that factor
through the map $BO(n)^\wedge_p\rightarrow BU(n)^\wedge_p$ for some $n$, which we
temporarily denote by $\Ch'_\RR(\cF)$. We first show that all these
choices lead to homeomorphic varieties:

\begin{prop}
  The inclusions $\Ch'_\RR(\cF) \rightarrow \Ch_\RR(\cF)\rightarrow
  \Ch''_\RR(\cF)$ induce homeomorphisms
  $$V_{\Ch''_\RR(\cF)}(k) \rightarrow V_{\Ch_\RR(\cF)}(k) \rightarrow
  V_{\Ch'_\RR(\cF)}(k)$$   of the associated varieties.
\end{prop}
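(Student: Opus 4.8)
The plan is to treat the two inclusions separately, reducing the comparison of $\Ch_\RR(\cF)$ with $\Ch''_\RR(\cF)$ to an identification of Green--Leary categories, and the comparison of $\Ch'_\RR(\cF)$ with $\Ch_\RR(\cF)$ to the regular-representation trick already used in the proof of Proposition \ref{prop:chequal}.

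First I would dispatch $\Ch_\RR(\cF)\subseteq\Ch''_\RR(\cF)$. Both rings are generated by Chern classes of the elements of an additive subgroup of $R(S)$ containing the regular representation: for $\Ch_\RR(\cF)$ take $A_\RR$, the virtual $\cF$-stable representations realizable over $\RR$, and for $\Ch''_\RR(\cF)$ take $A''_\RR$, the virtual $\cF$-stable representations with real-valued character (the regular representation lies in both, being real and $\cF$-stable). Thus Lemma \ref{l:glcid} applies to each, showing both are large and natural and describing $\cC(\Ch_\RR(\cF))$ and $\cC(\Ch''_\RR(\cF))$ by the condition $\chi(e)=\chi(f(e))$ for $e\in E_1$ and $\chi$ running over the characters of $A_\RR$, respectively $A''_\RR$. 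I would then show the two categories coincide. Since $A_\RR\subseteq A''_\RR$, one inclusion of categories is immediate; for the reverse, given $\rho\in A''_\RR$ the doubling $2\rho$ is realizable over $\RR$ (in the real type $\rho$ itself is, and in the quaternionic type $2\rho$ is the complexification of the underlying real representation), so $2\rho\in A_\RR$ and $\chi_{2\rho}=2\chi_\rho$; dividing by $2$ in $\CC$ shows the condition for $\rho$ is implied by the conditions coming from $A_\RR$. Hence $\cC(\Ch_\RR(\cF))=\cC(\Ch''_\RR(\cF))$, and as the homeomorphisms of Theorem \ref{t:glmain} for the two rings are both induced by the restriction homomorphisms to the $X_E(k)$, the inclusion-induced map $V_{\Ch''_\RR(\cF)}(k)\to V_{\Ch_\RR(\cF)}(k)$ is a homeomorphism.

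For $\Ch'_\RR(\cF)\subseteq\Ch_\RR(\cF)$ I would first check that the inclusion holds: restricting any $f\colon|\cL|_p\to BO(n)_p\to BU(n)_p$ along the natural map $BS\to|\cL|_p$ (using $[BS^\wedge_p,BO(n)^\wedge_p]=\Rep(S,O(n))$) yields a real representation $\eta$ of $S$ whose complexification is $\cF$-stable, so $\eta$ is real and $\cF$-stable, and the image of $f^*(c_i)$ in $H^*(S)$ is $c_i(\eta)$; injectivity of $H^*(\cF)\hookrightarrow H^*(S)$ then gives $f^*(c_i)=c_i(\eta)\in\Ch_\RR(\cF)$. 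To obtain the homeomorphism I would prove the reverse containment, so that in fact $\Ch_\RR(\cF)=\Ch'_\RR(\cF)$, by imitating Proposition \ref{prop:chequal}: given a real $\cF$-stable representation $\theta$ of dimension $n$, I would realize $\theta\oplus p^k\reg$ (with $\reg$ the real regular representation and $p^k>n$) by a map $|\cL|_p\to BO(N)_p$; exactly as in Proposition \ref{prop:chequal}, the Chern classes $c_i(p^k\reg)$ vanish for $1\le i\le n$, so that for $i\le n$ the complex Chern class $c_i(\theta)$ equals $c_i$ of the complexified realizing map and therefore lies in $\Ch'_\RR(\cF)$.

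The hard part, and the only genuinely new ingredient, is that last realizability step: it requires an orthogonal analogue of the unitary realizability result of \cite{CCM20} recalled after Theorem \ref{t:cohomology}, to the effect that every real $\cF$-stable representation of $S$ becomes, after adding enough copies of the real regular representation, the underlying real representation of a map $|\cL|_p\to BO(N)_p$. For a finite group this is automatic because $[BG,BO(n)]=\Rep(G,O(n))$, but for a fusion system the passage from $BS$ to $|\cL|_p$ is precisely the subtlety flagged in the introduction (the role of unitary, and here orthogonal, embeddings). I would attempt to deduce it from the unitary case by tracking the real structure through the complexification map $BO(n)_p\to BU(n)_p$, or, failing a clean reduction, establish it directly in the style of \cite{CCM20}. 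Granting this, all three varieties are identified with the colimit over the common Green--Leary category and every displayed map is a homeomorphism.
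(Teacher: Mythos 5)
Your treatment of the inclusion $\Ch_\RR(\cF)\subseteq\Ch''_\RR(\cF)$ is correct and is a legitimate alternative to the paper's argument: where the paper manipulates Chern classes directly (using $c_.(2\rho)=c_.(\rho)^2$ together with invertibility of $2$ for $p\neq 2$ and unique square roots in $k$ for $p=2$), you instead identify the two Green--Leary categories via Lemma \ref{l:glcid} and the observation that $\chi_{2\rho}=2\chi_\rho$ determines $\chi_\rho$; since Theorem \ref{t:glmain} then computes both varieties as the same colimit, the inclusion induces a homeomorphism. That half is fine, and arguably cleaner since it sidesteps the characteristic-two case analysis.

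The comparison of $\Ch'_\RR(\cF)$ with $\Ch_\RR(\cF)$ has a genuine gap, which you yourself flag: your plan hinges on realizing $\theta\oplus p^k\reg$, for $\theta$ a real $\cF$-stable representation, by a map $|\cL|^\wedge_p\rightarrow BO(N)^\wedge_p$. No orthogonal analogue of the realizability theorem of~\cite{CCM20} is available, and the paper neither proves nor needs one; in particular it does not claim $\Ch'_\RR(\cF)=\Ch_\RR(\cF)$ as rings, only that the varieties agree. The paper's device is to take the \emph{unitary} realization $f:|\cL|^\wedge_p\rightarrow BU(N)^\wedge_p$ of $\theta\oplus p^k\reg$ already supplied by the argument of Proposition \ref{prop:chequal}, and compose it with the standard inclusions $BU(N)\rightarrow BSO(2N)\rightarrow BU(2N)$. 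The composite factors through the (special) orthogonal group by construction, so its Chern classes lie in $\Ch'_\RR(\cF)$, and its total Chern class is $(c_.(f))^2$. The same squaring argument as in the first half then shows that every ring homomorphism $\Ch'_\RR(\cF)\rightarrow k$ extends uniquely to $\Ch_\RR(\cF)$, which yields the homeomorphism with no orthogonal lifting theorem required. To repair your route you should replace the unproved realizability step by this composition trick; as written, that step is an open problem rather than a lemma.
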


\begin{proof}
  There are inclusions $U(n)\rightarrow SO(2n)\rightarrow U(2n)$.
  If $M$ has trace $\lambda$, then the image of $M$ under this
  composite has trace $\lambda+\overline{\lambda}$.  Thus if
  $\rho$ is any complex representation with real character,
  then $2\rho$ is a real representation.  The
  total Chern characters are related by $c_\bullet(2\rho)=c_\bullet(\rho)^2$.
  In the case when $p\neq 2$, it follows that each $c_i(\rho)$ is
  in the subring generated by the Chern classes of $2\rho$, and
  so for $p\neq 2$, $\Ch''_\RR(\cF)=\Ch_\RR(\cF)$.  In the case
  when $p=2$, $c_{2i}(2\rho)=c_i(\rho)^2$, and since elements of
  the algebraically closed field $k$ of characteristic two have
  unique square roots, any ring homomorphism from $\Ch_\RR(\cF)$
  to $k$ extends uniquely to one from $\Ch''_\RR(\cF)$ to $k$.

  Let $\rho:S\rightarrow O(n)$ be a real representation of $S$
  that is $\cF$-stable.  The regular representation of $S$ is
  of course a real representation, 
  and so by the argument used in Proposition~\ref{prop:chequal}
  there exists  some  large $N$ and $f:|\cL|^\wedge_p\rightarrow BU(N)^\wedge_p$
  so that $c_i(f)=c_i(\rho)$ for $i\leq n$.  The composite
  of $f$ with the inclusion map $i:BU(N)^\wedge_p\rightarrow BSO(2N)^\wedge_p$
  has $c_\bullet(i\circ f)=(c_\bullet(f))^2$, and so by the argument used
  in the previous paragraph a ring homomorphism from $\Ch'_\RR(\cF)$
  to the algebraically closed field $k$ of characteristic $p>0$
  extends uniquely to a homomorphism from $\Ch_\RR(\cF)$ to $k$.
\end{proof}
We now obtain, just as in~\cite{GL98}, a description of the variety of
$\Ch_\RR(\cF)$.

\begin{prop}\label{p:realreps} If $R=\Ch_\RR(\cF) \subseteq H^*(\cF; \FF_p)$,
  then $\cC(R)=\cE'_\RR(\cF)$.
\end{prop}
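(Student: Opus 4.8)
The plan is to invoke Lemma~\ref{l:glcid} with $A$ the additive subgroup of $R(S)$ generated by the real $\cF$-stable representations of $S$. This is admissible: a sum of real $\cF$-stable representations is again real and $\cF$-stable, and the regular representation is both real and $\cF$-stable (as recorded in the proof of Proposition~\ref{p:linckreint}), so $\reg\in A$. Because the Chern classes of a virtual representation are polynomials in those of genuine representations, the subring generated by the Chern classes of elements of $A$ is precisely $\Ch_\RR(\cF)=R$. Lemma~\ref{l:glcid} then yields simultaneously that $R$ is large and natural, and that $f\in\Mor_{\cC(R)}(E_1,E_2)$ if and only if $\chi(e)=\chi(f(e))$ for every $e\in E_1$ and every real $\cF$-stable character $\chi$.

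It remains to match this condition with membership in $\cE'_\RR(\cF)$. The inclusion $\cE'_\RR(\cF)\subseteq\cC(R)$ is immediate: given $e\in E_1$ choose $\varphi\in\cF$ with $f(e)\in\{\varphi(e),\varphi(e^{-1})\}$; if $f(e)=\varphi(e)$ then $\chi(f(e))=\chi(e)$ by $\cF$-stability, while if $f(e)=\varphi(e^{-1})$ then $\chi(f(e))=\chi(e^{-1})=\overline{\chi(e)}=\chi(e)$, the final equality holding because the character of a real representation is real-valued.

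For the reverse inclusion I would first prove the real analogue of Corollary~\ref{c:rankconj}: if $s,t\in S$ satisfy $\chi(s)=\chi(t)$ for all real $\cF$-stable characters $\chi$, then $t$ is $\cF$-conjugate to $s$ or to $s^{-1}$. The key point is that for any complex representation $\rho$ the representation $\rho\oplus\overline{\rho}$ is realizable over $\RR$, being the complexification of the underlying real representation of $\rho$, and is $\cF$-stable whenever $\rho$ is; its character is $\chi_\rho+\overline{\chi_\rho}$. Inside $C(\cF)\otimes\CC$, which by Theorem~\ref{t:rankconj} is the space of $\cF$-stable $\CC$-valued class functions, consider the linear involution $\iota$ given by $(\iota f)(x)=f(x^{-1})$; it preserves $\cF$-stability, since $x^{-1}$ and $y^{-1}$ are $\cF$-conjugate whenever $x$ and $y$ are. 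For a genuine character $\chi$ one has $\iota\chi=\overline{\chi}$, so $\chi+\iota\chi$ is the character of the real representation $\rho\oplus\overline{\rho}$. As $\chi$ ranges over the $\cF$-stable characters, which span $C(\cF)\otimes\CC$, their images $\tfrac12(\chi+\iota\chi)$ under the projection $f\mapsto\tfrac12(f+\iota f)$ exhaust the $\iota$-fixed subspace; this subspace is exactly the space of class functions constant on the equivalence classes generated by $\cF$-conjugacy and inversion. Hence the real $\cF$-stable characters separate these classes, giving the analogue of Corollary~\ref{c:rankconj}. Applying it with $s=e$, $t=f(e)$ for each $e\in E_1$ shows $f\in\cE'_\RR(\cF)$, and combining with Theorem~\ref{t:glmain} produces the asserted homeomorphism.

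The main obstacle is this separation statement: exhibiting enough real $\cF$-stable characters to distinguish elements up to $\cF$-conjugacy and inversion. Everything else is formal; the crux is the identification of $\rho\oplus\overline{\rho}$ with the complexification of a real representation, so that averaging the complex $\cF$-stable characters over $\iota$ lands inside the real ones and fills out the whole $\iota$-fixed subspace.
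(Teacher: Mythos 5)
Your proposal is correct, and its overall architecture coincides with the paper's: both reduce the problem via Lemma~\ref{l:glcid} (largeness from the real regular representation, naturality, and the character-value criterion for morphisms), both get the inclusion $\cE'_\RR(\cF)\subseteq\cC(R)$ from the fact that real characters satisfy $\chi(g)=\chi(g^{-1})$, and both reduce the reverse inclusion to showing that real $\cF$-stable characters separate the classes generated by $\cF$-conjugacy and inversion. Where you genuinely diverge is in how that separation is proved. The paper starts from a single complex $\cF$-stable character $\chi$ with $\chi(h)\notin\{\chi(g),\overline{\chi(g)}\}$ (obtained from Corollary~\ref{c:rankconj}) and shows by a Vieta-type argument on the quadratic $x^2-(\alpha+\overline{\alpha})x+\alpha\overline{\alpha}$ that at least one of the two real $\cF$-stable characters $\chi+\overline{\chi}$ and $\chi\overline{\chi}$ already separates $g$ from $h$; this hands you an explicit separating character but needs the multiplicative structure (the product $\chi\overline{\chi}$, realized by $\rho\otimes\overline{\rho}$). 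You instead use only the sums $\chi+\overline{\chi}$, i.e.\ the realifications $\rho\oplus\overline{\rho}$, together with the averaging projection $f\mapsto\tfrac12(f+\iota f)$ onto the $\iota$-fixed subspace of $C(\cF)\otimes\CC$, and invoke Theorem~\ref{t:rankconj} to see that this subspace is exactly the class functions constant on the $\{g,g^{-1}\}$-classes. That is a clean, purely linear-algebraic route which sidesteps the small gap in the paper's appeal to Corollary~\ref{c:rankconj} (which as stated only separates two elements at a time, whereas the paper needs one $\chi$ satisfying two inequalities simultaneously). Your only blemishes are cosmetic: ``exhaust the $\iota$-fixed subspace'' should read ``span the $\iota$-fixed subspace'', and you should note explicitly that a $\CC$-linear combination of real $\cF$-stable characters separating $s$ from $t$ forces some individual such character to do so. Neither affects correctness.
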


\begin{proof}
  The regular real representation of $S$ is $\cF$-stable, and so
  $\Ch_\RR(\cF)$ is both large and natural by Lemma \ref{l:glcid}.  Moreover, it is
  easily seen to be natural and so Theorem~\ref{t:glmain} may be
  applied.  If $\chi$ is a real character, then $\chi(g)=\chi(g^{-1})$ for any $g\in S$. Hence $\cE'_\RR(\cF)$ is contained in
  $\cC(R)$.  By the argument given in \cite[Proposition 7.1 and
    Proposition 7.2]{GL98}, to establish the reverse inclusion it suffices to
  show that characters of real $\cF$-stable representations
  separate the conjugacy classes of pairs $\{g,g^{-1}\}$.  Let $g,h$
  be elements of $S$ and suppose that $h$ is not $\cF$-conjugate to
  either $g$ or to $g^{-1}$.  We need to construct a real $\cF$-stable
  character which takes different values on $g$ and $h$.  We first claim that
   there is an $\cF$-stable character $\chi$ with \begin{equation}\label{e:star}
  \chi(h)\neq \chi(g) \mbox{ and 
  } \chi(h) \neq \chi(g^{-1})=\overline{\chi(g)}.
 \end{equation} 
   To see this, note that Corollary \ref{c:rankconj} implies there are $\chi_1$ and $\chi_2$ with
$\chi_1(h)\neq \chi_1(g)$ and $\chi_2(h)\neq \chi_2(g^{-1})$.  Now at least one of the
three characters  $\chi_1$,  $\chi_2$, and  $\chi_1+\chi_2$ satisfies (\ref{e:star}). Indeed, the only way that $\chi_1$ can fail is if $\chi_1(h)=\chi_1(g^{-1})$ and the only
way that $\chi_2$ can fail is if $\chi_2(h)=\chi_2(g).$  In this case we have that
$\chi_1(h)+\chi_2(h)= \chi_1(h)+\chi_2(g) \neq \chi_1(g)+\chi_2(g)$ and
similarly
$\chi_1(h)+\chi_2(h)= \chi_1(g^{-1})+\chi_2(h) \neq \chi_1(g^{-1})+\chi_2(g^{-1})$.

   Now both the sum $\chi+\overline{\chi}$ and the
      product $\chi\overline{\chi}$ are $\cF$-stable real characters and we claim that at least one of these two characters will take
      different values on $g$ and $h$. If this is not the case then
      writing $\alpha=\chi(g)$ and $\beta=\chi(h)$, we
      have $$\beta\overline{\beta}=\alpha\overline{\alpha} \mbox{ and
      } \beta+\overline{\beta}=\alpha+\overline{\alpha}.$$ Hence
      $\{\chi(h),\overline{\chi}(h)\} =
      \{\chi(g),\overline{\chi}(g)\}$ is the solution set for the
      equation $$x^2-(\alpha+\overline{\alpha})x
      +\alpha\overline{\alpha} = x^2-(\beta+\overline{\beta})x
      +\beta\overline{\beta},$$ which contradicts $\chi(h) \notin \{\chi(g),\overline{\chi(g)}\}$, 

\end{proof}

\subsection{Permutation representations}
For a saturated fusion system $\cF$ on a $p$-group $S$, define the
\textit{linear permutation Chern subring} $\Ch_P(\cF)$ to be the
subring of $H^*(S)$ generated by the Chern classes of all $\cF$-stable
linear permutation representations. Define also the
\textit{permutation Chern subring} $\Ch'_P(\cF)$ to be the subring of
$H^*(S)$ generated by the linearizations of all $\cF$-stable
permutation representations. By Lemma \ref{l:glcid}, both these rings are large and natural and from the definitions,
$\Ch'_P(\cF)\subseteq \Ch_P(\cF)$.  One consequence of the next result
is that this inclusion induces a homeomorphism of varieties.

\begin{prop}\label{p:permreps} If $R=\Ch_P(\cF)$ and $R'=\Ch'_P(\cF)$,
  then $\cC(R)=\cC(R')=\cE'_P(\cF)$.
\end{prop}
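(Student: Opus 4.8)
The plan is to apply Lemma~\ref{l:glcid} to each of $R$ and $R'$, translating membership in the two Green--Leary categories into a condition on permutation character values, and then to sandwich both categories between two copies of $\cE'_P(\cF)$. Let $A$ and $A'$ be the additive subgroups of $R(S)$ generated by the permutation characters of the linearly $\cF$-stable $S$-sets and of the $\cF$-stable $S$-sets, respectively. Each contains the regular representation of $S$, which is the character of $S$ acting on itself and is $\cF$-stable; and by multiplicativity of the total Chern class the subring generated by the Chern classes of all elements of $A$ (respectively $A'$) is precisely $R=\Ch_P(\cF)$ (respectively $R'=\Ch'_P(\cF)$). So Lemma~\ref{l:glcid} will give that $R,R'$ are large and natural and that an injective homomorphism $f\colon E_1\to E_2$ lies in $\cC(R)$ (respectively $\cC(R')$) if and only if $\chi(e)=\chi(f(e))$ for all $e\in E_1$ and all permutation characters $\chi$ of linearly $\cF$-stable (respectively $\cF$-stable) $S$-sets. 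Since every $\cF$-stable $S$-set is linearly $\cF$-stable, $A'\subseteq A$, hence $R'\subseteq R$, and restricting the square~(\ref{e:comm}) from $R$ to $R'$ will give $\cC(R)\subseteq\cC(R')$ for free.

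Next I would establish $\cE'_P(\cF)\subseteq\cC(R)$. The key point is that for an $S$-set $X$ with character $\chi$ one has $\chi(e)=|X^{\langle e\rangle}|=\Phi_{\langle e\rangle}(X)$, so $\chi(e)$ depends only on the cyclic group $\langle e\rangle$. Given $f\in\cE'_P(\cF)$ and $e\in E_1$, I would pick $\varphi\in\Hom_\cF(\langle e\rangle,S)$ with $\varphi(\langle e\rangle)=\langle f(e)\rangle$; then for $\chi$ the character of a linearly $\cF$-stable $S$-set, $\cF$-stability gives $\chi(e)=\chi(\varphi(e))$, and since $\langle\varphi(e)\rangle=\langle f(e)\rangle$ the observation above gives $\chi(\varphi(e))=\chi(f(e))$. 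Thus $f\in\cC(R)$.

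The final step, and the one I expect to be the main obstacle, is $\cC(R')\subseteq\cE'_P(\cF)$; this is where Reeh's sets are indispensable. Arguing contrapositively, suppose $f$ is injective with $\langle e\rangle$ and $\langle f(e)\rangle$ not $\cF$-conjugate for some $e$; injectivity forces both to have order $p$, since $e=1$ would give $f(e)=1$ and trivial groups are $\cF$-conjugate. I would take $P$ a fully $\cF$-normalised representative of the $\cF$-class of $\langle e\rangle$ and the $\cF$-stable $S$-set $\alpha_P$ of Proposition~\ref{p:reehmonoid}, whose character lies in $A'$. Since $\Phi$ is constant on $\cF$-classes for an $\cF$-stable $S$-set, part~(2) gives $\Phi_{\langle e\rangle}(\alpha_P)=\Phi_P(\alpha_P)=|N_S(P)/P|\neq 0$; on the other hand $\langle f(e)\rangle$ has order $p$ and is not $\cF$-conjugate to $P$, hence (both groups having order $p$) is not $\cF$-subconjugate to $P$, so part~(1) gives $\Phi_{\langle f(e)\rangle}(\alpha_P)=0$. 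Thus the character of $\alpha_P$ separates $e$ and $f(e)$, so $f\notin\cC(R')$. The chain $\cE'_P(\cF)\subseteq\cC(R)\subseteq\cC(R')\subseteq\cE'_P(\cF)$ then forces all three to coincide. The difficulty is concentrated entirely in this separation: once Lemma~\ref{l:glcid} has reduced everything to character values, the sole non-formal ingredient is Reeh's basis, and it is crucial there that the $\alpha_P$ are genuinely (not merely linearly) $\cF$-stable, so that $\Phi$ is a class function on $\cF$-conjugacy classes and the fixed-point counts of Proposition~\ref{p:reehmonoid} apply.
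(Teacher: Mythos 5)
Your proof is correct and follows essentially the same route as the paper: Lemma~\ref{l:glcid} reduces membership in $\cC(R)$ and $\cC(R')$ to equality of permutation character values, the fixed-point formula $\chi(e)=\Phi_{\langle e\rangle}(X)$ handles the inclusion $\cE'_P(\cF)\subseteq\cC(R)$, and Reeh's sets $\alpha_P$ from Proposition~\ref{p:reehmonoid} provide the separating character for $\cC(R')\subseteq\cE'_P(\cF)$. Your explicit sandwich $\cE'_P(\cF)\subseteq\cC(R)\subseteq\cC(R')\subseteq\cE'_P(\cF)$ is just a slightly tidier bookkeeping of the same argument.
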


\begin{proof}
Since $R'\subseteq R$, $\cC(R) \subseteq \cC(R')$.  Suppose $f: E_1
\rightarrow E_2$ is a morphism in $\cC(R')$ and let $e \in E_1$. By
Lemma \ref{l:glcid}, $\chi(e)=\chi(f(e))$ for all permutation
characters $\chi$ of $\cF$-stable $S$-sets.  

If $\langle e \rangle$ and $\langle f(e) \rangle$ are $\cF$-conjugate,
then $f(e)$ and $e^i$ are $\cF$-conjugate for some $i \ge 1$ and
$\chi(f(e))= \chi(e^i)=\chi(e)$ for all permutation characters $\chi$
of $\cF$-stable $S$-sets. Thus $\cE'_P (\cF) \subseteq \cC(R)$.

Now suppose that $\langle e \rangle$ and $\langle f(e) \rangle$ are
not $\cF$-conjugate, and let $\langle e' \rangle$ be a fully
$\cF$-normalised $\cF$-conjugate of $\langle e \rangle$.  We show that $f\notin \cC(R')$. Suppose, to the contrary, that $f\in \cC(R')$ and let $\chi$ be the permutation
character associated to the $\cF$-stable $S$-set $\alpha_{\langle e
  \rangle}$ from Proposition \ref{p:reehmonoid}.  Since $\langle f(e)
\rangle$ and $\langle e \rangle$ are not $\cF$-conjugate, we have that
$\Phi_{\langle f(e) \rangle}(\alpha_{\langle e \rangle})=0$ by
Proposition \ref{p:reehmonoid}(1). On the other hand, $$\Phi_{\langle
  e' \rangle}(\alpha_{\langle e \rangle})=|N_S(\langle e'
\rangle)/\langle e' \rangle| \neq 0$$ by Proposition
\ref{p:reehmonoid}(2). If $i \ge 1$ is such that $e'^i$ is
$\cF$-conjugate to $e$, then since $\chi$ is $\cF$-stable,
$$0 \neq |\Fix_{\alpha_{\langle e
    \rangle}}(e'^i)|=\chi(e'^i)=\chi(e)=\chi(f(e)) =
|\Fix_{\alpha_{\langle e \rangle}}(f(e))|.$$ Since $\langle f(e)
\rangle$ is cyclic of order $p$ this implies that $\Phi_{\langle f(e)
  \rangle}(\alpha_{\langle e \rangle}) \neq 0$, a contradiction.
  
We have thus shown that $\cE'_P (\cF)\subseteq \cC(R) \subseteq \cC(R') \subseteq \cE'_P (\cF)$, as required. 
\end{proof}

\begin{proof}[Proof of Theorem \ref{t:main1}]
  This follows on combining Propositions \ref{p:allreps}, \ref{p:realreps}
  and \ref{p:permreps}.
\end{proof}

\section{Characteristic classes of $\cF$-stable permutations}

In this section, $\cF$ is a saturated fusion system on a finite
$p$-group $S$. We adapt the discussion in \cite{GLS02} to the setting
of fusion systems.

\begin{lem}
    Let $X$ be an $\cF$-stable $S$-set and $\rho_X: S \rightarrow
    \Sigma_n$ be an associated homomorphism. Then the induced
    map $$\rho_X^*: H^*(\Sigma_n) \rightarrow H^*(S)$$ depends only on
    $X$ and has image contained in $H^*(\cF)$.
\end{lem}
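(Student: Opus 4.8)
The plan is to prove two separate assertions about the map $\rho_X^*: H^*(\Sigma_n) \rightarrow H^*(S)$: first, that it depends only on the $S$-set $X$ (and not on the choice of homomorphism $\rho_X$ realizing it), and second, that its image lands inside the subring $H^*(\cF) \subseteq H^*(S)$ of $\cF$-stable elements.

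For the first assertion, I would recall that any two homomorphisms $\rho_X, \rho_X': S \rightarrow \Sigma_n$ associated to the same $S$-set $X$ differ only by a relabeling of the $n$ points, i.e.\ by post-composition with an inner automorphism of $\Sigma_n$ (conjugation by some $\sigma \in \Sigma_n$). Since inner automorphisms of a group $G$ induce the identity map on $H^*(G;\FF_p)$ (a standard fact, because conjugation is homotopic to the identity on $BG$), the composites $B\rho_X$ and $B\rho_X'$ differ by a self-map of $B\Sigma_n$ inducing the identity in cohomology. Hence $\rho_X^* = (\rho_X')^*$, giving independence of the choice.

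For the second assertion, I would use the criterion from Definition~\ref{d:homfstable}: an element of $H^*(S;\FF_p)$ is $\cF$-stable precisely when it maps into the inverse limit over $\cO(\cF^c)$, which amounts to checking that $\res^S_P$ and $\res^S_{\varphi(P)}$ followed by $\varphi^*$ agree for all $P \le S$ and all $\varphi \in \Hom_\cF(P,S)$. Concretely, for $u \in \im(\rho_X^*)$ it suffices to show that $\varphi^* \circ \res_{\varphi(P)}(u) = \res_P(u)$. Writing $u = \rho_X^*(v)$ for $v \in H^*(\Sigma_n)$, this reduces to comparing the two homomorphisms $\rho_X \circ \varphi: P \rightarrow \Sigma_n$ and $\rho_X|_{\varphi(P)} \circ \varphi$; but by the reformulation of $\cF$-stability for $\rho_X$ given just before Proposition~\ref{p:reehmonoid} (namely that $\rho_X|_P$ and $\rho_X \circ \phi$ differ by an inner automorphism of $\Sigma_n$), these two homomorphisms differ by an inner automorphism of $\Sigma_n$. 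Applying again the fact that inner automorphisms act trivially on $H^*(\Sigma_n)$, the two induced maps on cohomology coincide, which is exactly the required compatibility.

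I expect the main obstacle to be purely bookkeeping: carefully matching up the definition of $\cF$-stability of the $S$-set $X$ (phrased in terms of isomorphisms of $P$-sets $X \cong {}^\phi X$) with the cohomological compatibility condition defining $H^*(\cF)$, and keeping track of which inner automorphism of $\Sigma_n$ intertwines which pair of homomorphisms. Both key reductions rely on the single clean input that inner automorphisms induce the identity on mod-$p$ cohomology, so once that is invoked in the right places the argument should close quickly; the delicate point is ensuring that the inner automorphism relating $\rho_X|_P \circ \varphi$ and $\rho_X|_{\varphi(P)}$ arises from the hypothesis that $X$ is $\cF$-stable rather than being assumed.
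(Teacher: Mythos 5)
Your proposal is correct and takes essentially the same route as the paper: the well-definedness claim is handled identically via inner automorphisms of $\Sigma_n$, and your direct verification of the stability condition $\res^S_P(u)=\varphi^*\bigl(\res^S_{\varphi(P)}(u)\bigr)$ is precisely an unwinding of the paper's appeal to the bijection $\lim_{\cO(\cF^c)} A_n(P)\cong A_n(\cF)$ together with the diagram argument of Proposition~\ref{p:cohcomp} with $U(n)$ replaced by $\Sigma_n$. The only blemish is a harmless slip where you propose to compare $\rho_X\circ\varphi$ with $\rho_X|_{\varphi(P)}\circ\varphi$ (these two maps are equal); the correct pair, which you do name in the following clause, is $\rho_X|_P$ versus $\rho_X\circ\varphi$, and these differ by an inner automorphism exactly because $X\cong{}^{\varphi}X$ as $P$-sets.
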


\begin{proof}
    Since the images of two choices of $\rho_X$ differ only by an
    inner automorphism of $\Sigma_n$, $\rho_X^*$ depends only on
    $X$. Denote by $A_n(P)$ the set of isomorphism classes of $P$-sets
    of order $n$ for a finite group $P$, and by $A_n(\cF)$ the set of
    isomorphism classes of $\cF$-stable $S$-sets of order $n$. Then
    arguing exactly as in Proposition \ref{p:repfact}, we have a
    bijection $$\lim_{\substack{\longleftarrow \\ \cO(\cF^c)}} A_n(P)
    \cong A_n(\cF).$$ Now the argument in Proposition \ref{p:cohcomp}
    with $U(n)$ replaced by $\Sigma_n$ yields the result.
\end{proof}

\begin{defn}
Let $\cS(\cF)$ be the \textit{permutation subring} of $H^*(S)$
generated by $\im(\rho^*_X)$ for all $\cF$-stable $S$-sets.
\end{defn}  

Just as for the Chern subring, there is an analogous topological
definition: $\cS'(\cF)$ is defined to be the subring of
$H^*(|\cL|_p^\wedge;\FF_p)\leq H^*(S;\FF_p)$ generated by the images
in cohomology of all maps $f:|\cL|_p^\wedge\rightarrow
B(\Sigma_n)^\wedge_p$ for all $n\geq 1$.  A map $f$ as above defines
an $\cF$-stable $S$-set of cardinality $n$ and so there is an inclusion
map $\cS'(\cF)\rightarrow \cS(\cF)$.  We have been unable to
establish a direct analogue of Proposition~\ref{prop:chequal},
but we shall show below that this inclusion map induces an
isomorphism of varieties, a result that appeared as a conjecture in
an earlier version of this article.

The connection between $\cF$-stable $S$-sets and maps of $p$-completed
classifying spaces is rather more subtle than for $\cF$-stable unitary
representations; see the PhD thesis of Matthew Gelvin~\cite{G10} for a
discussion.  Nevertheless, in~\cite[Proposition~7.8]{CL09} it is shown
that for any $\cF$-stable $S$-set $X$ there is $k\geq 0$ and a map
$f:|\cL|_p^\wedge\rightarrow B(\Sigma_n)^\wedge_p$, where $n=p^k|X|$,
so that the $\cF$-stable $S$-set induced by $f$ is $p^k.X$.

   Recall the definition of the category $\cA=\cA(\cF)$ in Section
   \ref{s:intro} which is analogous to the category $\cA_h$ in
   \cite[Section 3]{GLS02} by \cite[Lemma 3.2]{GLS02}. We apply
   Proposition \ref{p:reehmonoid} to prove the following analogue of
   \cite[Lemma 2.7]{GLS02} for fusion systems:

\begin{lem}\label{l:sfident}
Let $E_1,E_2 \le S$ and $f: E_1 \rightarrow E_2$ be an injective group
homomorphism. Then $f \in \cA$ if and only if for every $x \in
\cS(\cF)$, the class $\res^S_{E_1}(x)-f^*\res^S_{E_2}(x)$ lies in the
nilradical of $H^*(E_1)$.  Moreover the same statement holds with
$\cS'(\cF)$ instead of $\cS(\cF)$.  
\end{lem}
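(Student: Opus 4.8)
The statement relates a category condition ($f \in \cA(\cF)$) to a nilpotency condition on the difference of restriction maps applied to every element of the permutation subring $\cS(\cF)$. The key tool is that elements of $\cS(\cF)$ arise as images $\rho_X^*(y)$ for $\cF$-stable $S$-sets $X$ and $y \in H^*(\Sigma_n)$, together with Quillen's description of $H^*(\Sigma_n)$ via elementary abelian subgroups. Since everything is happening modulo the nilradical, I would pass to $h^*(E_1)$ and reformulate the condition as an equality of ring homomorphisms $\res^S_{E_1} = f^* \circ \res^S_{E_2}$ on $\cS(\cF)$ after composing with the projection $H^*(E_1) \to h^*(E_1)$.

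\textbf{The forward direction.} First I would assume $f \in \cA(\cF)$, so that $f(U)$ is $\cF$-conjugate to $U$ for every $U \le E_1$. The goal is to show that for any $\cF$-stable $S$-set $X$ and any $y \in H^*(\Sigma_n)$, the classes $\res^S_{E_1}\rho_X^*(y)$ and $f^*\res^S_{E_2}\rho_X^*(y)$ agree modulo nilpotents. The strategy is to detect cohomology on elementary abelian subgroups: a class in $H^*(E_1)$ is nilpotent if and only if its restriction to every (cyclic, or just every) subgroup of $E_1$ vanishes after passing to $h^*$. So I would restrict to an arbitrary subgroup $U \le E_1$ and compare $\res^{E_1}_U \res^S_{E_1}\rho_X^*(y)$ with $\res^{E_1}_U f^*\res^S_{E_2}\rho_X^*(y)$. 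The first is computed from the $U$-set ${}^{\iota}X$ where $\iota: U \hookrightarrow S$, while the second is computed from the $f(U)$-set via $f$; since $f(U)$ and $U$ are $\cF$-conjugate and $X$ is $\cF$-stable, the two permutation representations of $U$ are isomorphic, so the induced maps on cohomology agree. This shows the difference restricts to zero on every $U \le E_1$, hence is nilpotent.

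\textbf{The reverse direction.} Conversely, I would assume the nilpotency condition holds for all $x \in \cS(\cF)$ and deduce $f \in \cA(\cF)$, i.e. that $f(U)$ is $\cF$-conjugate to $U$ for every $U \le E_1$. The natural approach is contrapositive: if some $U \le E_1$ has $f(U)$ \emph{not} $\cF$-conjugate to $U$, I would produce an element $x \in \cS(\cF)$ and a subgroup of $E_1$ on which $\res^S_{E_1}(x) - f^*\res^S_{E_2}(x)$ restricts nontrivially. The tool here is Reeh's basis from Proposition~\ref{p:reehmonoid}: the $\cF$-stable $S$-set $\alpha_P$ (for $P$ a fully $\cF$-normalised conjugate of $U$) has fixed-point counts $\Phi_{P'}(\alpha_P) \neq 0$ exactly on the $\cF$-conjugates of $P$ and vanishing otherwise. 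Using the relation between fixed-point numbers and the behaviour of $\rho_{\alpha_P}^*$ on elementary abelian subgroups (via the transfer/restriction formula computing $\res$ of a permutation class on an elementary abelian group in terms of fixed points), I would arrange that the restriction to $U$ and the restriction to $f(U)$ differ. The subtlety is that one must detect the distinction using only the \emph{images} of $H^*(\Sigma_n)$, not arbitrary classes on $S$, so I expect to mimic the computation of~\cite[Lemma 2.7]{GLS02} identifying which Quillen-theoretic data the classes $\rho_X^*(y)$ record on an elementary abelian subgroup.

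\textbf{Main obstacle.} The hard part will be the reverse direction, specifically pinning down precisely what invariant of the pair $(U, X)$ the restriction of $\rho_X^*(H^*(\Sigma_n))$ to $h^*(U)$ captures, and verifying that the fixed-point data guaranteed by Reeh's basis is enough to separate a non-$\cF$-conjugate pair $(U, f(U))$ at the level of these cohomological images. This requires the computation of restrictions of symmetric-group cohomology classes to elementary abelian subgroups under a permutation representation, which is exactly the point where~\cite{GLS02} does its real work; I would lean on their Lemma~2.7 argument, substituting $\cF$-conjugacy for $G$-conjugacy and Reeh's $\alpha_P$ for the orbit-counting constructions available in the group case.
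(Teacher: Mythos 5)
Your proposal is correct and follows essentially the same route as the paper: your forward direction (restricting to each $U\le E_1$ and using $f\in\cA(\cF)$ plus $\cF$-stability to see that the permutation representations of $U$ via $X$ and via $\tw{f}X$ are isomorphic) is exactly the paper's conjugate-in-$\Sigma_n$ argument — indeed taking $U=E_1$ already gives equality of the two ring maps on the generating images $\im(\rho_X^*)$, so your nilpotence-detection detour is redundant (and note that restriction to \emph{proper}, e.g.\ cyclic, subgroups alone would \emph{not} detect nilpotence in $H^*(E_1)$, so only the $U=E_1$ case of your claim is valid); and your reverse direction (contrapositive, Reeh's $\alpha_P$ with $P$ a fully $\cF$-normalised conjugate of $U$, then detection of the non-isomorphic restricted sets by classes pulled back from $H^*(\Sigma_n)$) is precisely the paper's argument, which likewise does not reprove the detection step but cites it from \cite[Section 9]{GL98}. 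The one step the paper includes that you elide — replacing $f$ and $U$ via the extension axiom applied to $N_\varphi$ so that $U$ itself is fully $\cF$-normalised — is legitimately sidestepped by your formulation, since marks of $\cF$-stable sets are constant on $\cF$-conjugacy classes, giving $\Phi_U(\alpha_P)=\Phi_P(\alpha_P)=|N_S(P)/P|\neq 0$ while $\Phi_{f(U)}(\alpha_P)=0$ because $f(U)$ has the same order as $P$ but is not $\cF$-conjugate to it (your phrase ``vanishing otherwise'' overstates Reeh's Proposition, which allows nonzero marks at properly subconjugate subgroups, but this does not matter here as only equal-order subgroups arise).
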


\begin{proof}
    Suppose $f \in \cA$. Then $E_1$ and $f(E_1)$ are $\cF$-conjugate,
    so that by Proposition \ref{p:reehmonoid}(3), the $\cF$-stable
    $S$-sets $\alpha_{E_1}$ and $\alpha_{f(E_1)}$ are
    isomorphic. Denoting (respectively) by $\rho_1$ and $\rho_2$ the
    corresponding $S$-representations, we have $\rho_1|_{E_1} \cong
    \rho_2|_{f(E_1)} \circ f$ and so there exists some $\sigma \in
    \Sigma_{|S|}$ such that the
    diagram \begin{equation}\label{e:comm2}
\begin{tikzcd}
E_1 \arrow[d, "\rho_1|_{E_1}" ] \arrow[r, "f"] 
& f(E_1) \arrow[d, "\rho_2|_{f(E_1)}" ] \\
\Sigma_{|S|} \arrow[r, "c_{\sigma}"]
&  \Sigma_{|S|}
\end{tikzcd}
    \end{equation} commutes.
    Hence $\res^S_{E_1} - f^* \res^S_{E_2}$ kills $\im(\rho_1^*)$.
    Since $\cS'$ is a subring of $\cS$, this argument proves
    the `only if' part of the claim for $\cS'$ too.  

    Conversely, suppose that $f \notin \cA$. Then there exists $U \le
    E_1$ such that $U$ is not $\cF$-conjugate to $f(U)$. If $U$ is not
    fully $\cF$-normalised, then let $\varphi \in \Hom_\cF(U,S)$ be
    such that $\varphi(U)$ is fully $\cF$-normalised. Then, since
    $\cF$ is saturated, there exists $\widetilde{\varphi} \in
    \Hom_{\cF}(N_\varphi, S)$ which extends $\varphi$. Since $E_1 \le
    C_S(U) \le N_{\varphi}$ we obtain a map $$f':=f \circ
    (\widetilde{\varphi}|_{E_1})^{-1} \in \Hom(\varphi(E_1),E_2).$$
    Plainly $f' \notin \cA$ since $f \notin \cA$, so replacing $f$ by
    $f'$ and $U$ by $\varphi(U)$ if necessary, we may assume that $U$
    is fully $\cF$-normalised. Then by Proposition \ref{p:reehmonoid},
    $\Phi_{f(U)}(\alpha_{U})=0 \neq \Phi_U(\alpha_U)$.  Let $X$ be
    the direct sum of $\alpha_U$ and the regular representation.
    Then $\Phi_{f(U)}(X)=0\neq \Phi_U(X)$ and $S$ acts faithfully
    on $X$.  Let $\rho:S\rightarrow \Sigma_{|X|}$ be the corresponding
    permutation representation of $S$.  Then $\rho$ is injective and
    since $\Phi_{f(U)}(X)\neq \Phi_{U}(X)$, it follows that $\rho(U)$
    and $\rho(f(U))$ are not conjugate in the symmetric group
    $\Sigma_{|X|}$.  As in~\cite[Lemma~2.7]{GLS02}, the construction
    used in the proof of~\cite[Theorem~8.1]{GL98} gives a class
    $\zeta\in H^*(\Sigma_{|X|})$ whose image in $\rho(U)$ is
    non-nilpotent, while its image in $\rho(f(U))$ is zero.
    If we let $x=\rho^*(\zeta)$, then $\res^S_{E_1}(x)-f^*\res^S_{E_2}(x)$
    is not in the nilradical of $H^*(E_1)$, since its image under
    $\res^{E_1}_U$ is not nilpotent.  
   
    It cannot be assumed that there is a map $|\cL|_p^\wedge\rightarrow
    (B\Sigma_{|X|})_p^\wedge$ that gives rise to the $\cF$-stable $S$-set
    $X$.  However, by~\cite[Proposition~7.8(a)]{CL09} there exists
    $k\geq 0$ and a map
    $\rho':|\cL|_p^\wedge\rightarrow (B\Sigma_{p^k|X|})_p^\wedge$ that
    gives rise to the $\cF$-stable $S$-set $Y=p^k.X$.  As before,
    $\rho'$ defines an embedding of $S$ into $\Sigma_{|Y|}$ and
    since $\Phi_{U}(Y)\neq \Phi_{f(U)}(Y)$ it follows that $\rho'(U)$
    and $\rho'(f(U))$ are non-conjugate elementary abelian subgroups
    of $\Sigma_{|Y|}$ of the same rank.  Hence there is $\zeta'\in
    H^*(\Sigma_{|Y|})$ whose image in $\rho'(U)$ is non-nilpotent,
    while its image in $\rho'(f(U))$ is zero.  Now $x'=\rho'^*(\zeta')$
    has the property that $\res^S_{E_1}(x')-f^*\res^S_{E_2}(x')$ is not
    in the nilradical of $H^*(E_1)$ as before.
\end{proof}

We now obtain.

\begin{thm}\label{t:perms}
  The restriction maps in cohomology induce natural homeomorphisms
  $$\colim_{\substack{\cA(\cF)}}  X_E(k) \rightarrow V_{\cS(\cF)}(k)
  \rightarrow V_{\cS'(\cF)}(k).$$  
\end{thm}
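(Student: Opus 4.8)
The plan is to apply the Green--Leary theorem (Theorem \ref{t:glmain}) to $R=\cS(\cF)$ and then to identify the resulting indexing category $\cC(\cS(\cF))$ with $\cA(\cF)$. Thus I would organize the argument into three steps: verify that $\cS(\cF)$ is large and natural; invoke Theorem \ref{t:glmain} to obtain the homeomorphism $\colim_{\cC(\cS(\cF))} X_E(k)\to V_{\cS(\cF)}(k)$; and finally prove $\cC(\cS(\cF))=\cA(\cF)$ as categories of elementary abelian subgroups.

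Naturality is immediate: $\cS(\cF)$ is generated by the images of the graded $\FF_p$-algebra homomorphisms $\rho_X^*\colon H^*(\Sigma_n)\to H^*(S)$, and since each $\rho_X^*$ is induced by a map of spaces it commutes with the Steenrod operations. Hence the generating set is closed under the Steenrod algebra and consists of homogeneous elements, and so is the subring it generates. For largeness, the point is that the regular $S$-set---$S$ acting on itself by left multiplication---is $\cF$-stable: for any morphism $\phi\colon P\to S$ in $\cF$, the restriction $\Res^S_P S$ and its twist ${}^\phi S$ are both free $P$-sets of cardinality $|S|$ and hence isomorphic. The linearization of this $S$-set is the regular representation of $S$, and its Chern classes factor through the inclusion $\Sigma_{|S|}\hookrightarrow U(|S|)$, hence lie in $\cS(\cF)$. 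Therefore $\cS(\cF)$ contains the Chern classes of the regular representation and is large.

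With both hypotheses verified, Theorem \ref{t:glmain} produces the homeomorphism with indexing category $\cC(\cS(\cF))$, so it only remains to match this category with $\cA(\cF)$. By definition, a morphism $f\colon E_1\to E_2$ belongs to $\cC(\cS(\cF))$ exactly when the square (\ref{e:comm}) commutes, i.e.\ when $\res^S_{E_1}(x)=f^*\res^S_{E_2}(x)$ in $h^*(E_1)=H^*(E_1)/\sqrt{0}$ for every $x\in\cS(\cF)$; this is precisely the requirement that $\res^S_{E_1}(x)-f^*\res^S_{E_2}(x)$ lie in the nilradical of $H^*(E_1)$ for all such $x$. By Lemma \ref{l:sfident} this condition is equivalent to $f\in\cA$, so $\cC(\cS(\cF))=\cA(\cF)$ and the theorem follows.

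The real content of the theorem is thus entirely absorbed into Lemma \ref{l:sfident}, which converts the nilradical condition (equivalently, the commuting diagram defining $\cC$) into the subconjugacy condition defining $\cA(\cF)$; its proof relies on Reeh's $\cF$-stable $S$-sets $\alpha_Q$ to detect non-$\cF$-conjugate subgroups and on \cite[Section 9]{GL98}, applied to $\Sigma_{|S|}$, to produce a non-nilpotent restriction difference. Since that lemma is already in hand, the only residual care needed is in the largeness verification---confirming that the regular $S$-set really is $\cF$-stable and that the Chern classes of its linearization genuinely land in $\cS(\cF)$; everything else is formal.
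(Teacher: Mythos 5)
Your proposal is correct and follows essentially the same route as the paper's proof: verify that $\cS(\cF)$ is large and natural, apply Theorem \ref{t:glmain}, and identify the resulting category with $\cA(\cF)$ via Lemma \ref{l:sfident}. The extra detail you supply on the $\cF$-stability of the regular $S$-set and on factoring through $\Sigma_{|S|}\hookrightarrow U(|S|)$ is a welcome elaboration of what the paper leaves as ``obvious,'' but it is not a different argument.
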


\begin{proof}
    The regular representation $\rho: S \rightarrow \Sigma_{|S|}$ is
    obviously $\cF$-stable so $\cS(\cF)$ is large. It is also natural
    because $\cS(\cF)$ is clearly homogeneously generated and closed
    under the action of the Steenrod algebra. Hence by Theorem
    \ref{t:glmain}, there exists some category $\cC$ of elementary
    abelian $p$-subgroups for which $$\colim_{\substack{\cC}} X_E(k)
    \rightarrow V_{\cS(\cF)}(k)$$ is a homeomorphism.  Finally, Lemma \ref{l:sfident}
    identifies $\cC$ with the category $\cA(\cF)$ defined above.

    The subring $\cS'(\cF)$ is also clearly natural.  The Chern classes
    of the regular representation $\rho$ may not lie in $\cS'(\cF)$,
    but by~\cite[Proposition~7.8(a)]{CL09}, there exists $k$ so that
    the Chern classes of $p^k.\rho$ lie in $\cS'(\cF)$.  The remainder
    of the argument using Theorem~\ref{t:glmain} and Lemma~\ref{l:sfident}
    proceeds exactly as for $\cS(\cF)$.  
\end{proof}

\begin{cor}\label{cor:conj63}
  The inclusion $\cS'(\cF)\rightarrow \cS(\cF)$ is an inseparable isogeny.  
\end{cor}

\begin{proof}
  Immediate from Theorem~\ref{t:perms}, since this inclusion induces a
  homeomorphism of varieties.
\end{proof}

\end{document}